\newcommand{\surfint}{{\int_{\partial\Omega}}}
\newcommand{\BB}{{\mathbb{B}^d}}
\newcommand{\RR}{{\mathbb R}}
\newcommand{\TT}{{\mathrm T}}
\newcommand{\DD}{{\mathcal{T}}}
\renewcommand{\epsilon}{{\varepsilon}}
\newcommand{\nhat}{{\hat{n}}}
\newcommand{\that}{{\hat{t}}}
\newcommand{\rp}{{\rho^\prime}}
\newcommand{\rpp}{{\rho^{\prime\prime}}}
\newcommand{\Rp}{{R^{\prime}}}
\newcommand{\ainf}{{a_\infty}}
\newcommand{\ainfsq}{{a_\infty^2}}
\newcommand{\ostar}{{\omega^*}}
\newcommand{\Ostar}{{\Omega^*}}
\newcommand{\ubar}{{\overline{u}}}
\DeclareMathOperator{\grad}{grad}
\DeclareMathOperator{\mydiv}{div}
\newcommand{\sproj}{{P_{\partial\Omega}}}
\newcommand{\sgrad}{{\grad_{\partial\Omega}}}
\newcommand{\sdiv}{{\mydiv_{\partial\Omega}}}
\newtheorem{thm}{Theorem}
\newtheorem{lemma}[thm]{Lemma}
\newtheorem{prop}[thm]{Proposition}
\newtheorem{fact}[thm]{Fact}
\newtheorem{cor}[thm]{Corollary}
\theoremstyle{remark}
\newtheorem*{rmk}{Remark}
\newtheoremstyle{newdiffnotenonum}{}{}{\itshape}{}{\bfseries}{.}{ }%
  {\thmname{#1}\thmnote{( \mdseries #3)}}
\theoremstyle{newdiffnotenonum}
\title[Free plate isoperimetric inequality]{An isoperimetric inequality for fundamental tones of free plates}
\author{L. M. Chasman}
\address{Department of Mathematics, Knox College, Galesburg,
IL 61401, U.S.A.} \email{lchasman$@$knox.edu}
\date{\today}
\keywords{isoperimetric, free plate, bi-Laplace}
\subjclass[2000]{\text{Primary 35P15. Secondary 35J40, 35J35}}
\begin{document}
\begin{abstract}
We establish an isoperimetric inequality for the fundamental tone (first nonzero eigenvalue) of the free plate of a given area, proving the ball is maximal. Given $\tau>0$, the free plate eigenvalues $\omega$ and eigenfunctions $u$ are determined by the equation $\Delta\Delta u-\tau\Delta u = \omega u$ together with certain natural boundary conditions. The boundary conditions are complicated but arise naturally from the plate Rayleigh quotient, which contains a Hessian squared term $|D^2u|^2$.

We adapt Weinberger's method from the corresponding free membrane problem, taking the fundamental modes of the unit ball as trial functions. These solutions are a linear combination of Bessel and modified Bessel functions.
\end{abstract}

\maketitle

\section{\bf Introduction}
Laplacian and bi-Laplace operators are used to model many physical situations, with their eigenvalues representing quantities such as energy and frequency. The eigenvalues $\mu$ of the Neumann Laplacian on a region $\Omega$ determine the frequencies of vibration of a free membrane with that shape. If $\Omega^*$ is the ball of same volume as $\Omega$, then we have
\[
\mu_1(\Omega) \leq \mu_1(\Ostar)\qquad\text{with equality if and only if $\Omega$ is a ball.}
\]
First conjectured by Kornhauser and Stakgold \cite{KS52}, this isoperimetric inequality was proved for simply connected domains in $\RR^2$ by Szeg\H o \cite{S50,Serrata} and extended to all domains and dimensions by Weinberger \cite{W56}.

The main goal of this paper is to establish the analogous result for the eigenvalues $\omega$ the free plate under tension. That is, of all regions $\Omega$ with the same volume, we have the bound
\[
\omega_1(\Omega) \leq \omega_1(\Ostar)\qquad\text{with equality if and only if $\Omega$ is a ball.}
\]
Our proof relies on the variational characterization of eigenvalues with suitable trial functions. Similar to Weinberger's approach for the free membrane, we take our trial functions to be extensions of the fundamental mode of the unit ball. However, because the plate equation is fourth order, finding the trial functions and establishing the appropriate monotonicities is significantly more complicated than in the membrane case. The eigenmodes of the unit ball are identified in our companion paper \cite{chasman}, where we also establish several properties of these functions that are used in the proof of the isoperimetric inequality. If the reader is satisfied with a numerical demonstration, the needed properties of ultraspherical Bessel functions can be verified in any given dimension using Mathematica or Maple.

The boundary conditions of the free plate are not imposed, but instead arise naturally from the Rayleigh quotient. It is therefore extremely important that we begin with the correct Rayleigh quotient for the plate, which includes a Hessian term. These natural boundary conditions have long been known in the case $d=2$ (see, eg, \cite{RW74}); we include in this paper their derivation for all dimensions. 

The fundamental tone of the ball is extremal for other physically meaningful plate boundary conditions. The ball provides a lower bound for the clamped plate eigenvalues \cite{T81, N92, N95, AB95}. The methods used by Talenti, Nadirashvilli, Ashbaugh and Benguria to prove the clamped plate isoperimetric inequality are quite different than those for the free plate and membrane and only establish the bound in dimensions 2 and 3. The problem remains open for dimensions four and higher, with a partial result by Ashbaugh and Laugesen \cite{AL96}. For an overview of work on the clamped plate problem, see \cite[Chapter 11, p.\ 169--174]{H06} and \cite[p.\ 105--116]{Kesavan}.

Other plate boundary conditions include the simply supported plate, hinged plate, and Neumann boundary conditions. Plate problems are fourth-order and generally more difficult than their second-order membrane counterparts, because the theory of the bi-Laplace operator is not as well understood as the theory of the Laplacian. Verchota recently established the solvability of the biharmonic Neumann problem \cite{verchota}; these boundary conditions arise from the zero-tension plate and allow consideration of Poisson's ratio, a measure of a material property that we take to be zero for our free plate. Supported plate work includes Payne \cite{P58} and Licar and Warner \cite{LW73}, who examine domain dependence of plate eigenvalues. It would be natural to conjecture an isoperimetric inequality for the simply supported plate, although there does not seem to be any work on this problem to the best of our knowledge. Work with hinged plates includes Nazarov and Sweers \cite{NS07}. 

Other notable mathematical work on plates includes Kawohl, Levine, and Velte \cite{KLV}, who investigated the sums of low eigenvalues for the clamped plate under tension and compression, and Payne \cite{P58}, who considered both vibrating and buckling free and clamped plates and established inequalities bounding plate eigenvalues by their (free or fixed) membrane counterparts. For a broad survey of results, see \cite{Asummary,Bandle}.

\section{\bf Formulating the problem}
We now develop the mathematical formulation of the free plate isoperimetric problem.
Let $\Omega$ be a smoothly bounded region in $\RR^d$, $d\geq 2$, and fix a parameter $\tau > 0$. The ``plate'' Rayleigh quotient is
\begin{equation}
Q[u] = \frac{\int_\Omega |D^2 u|^2 + \tau |D u|^2\,dx}{\int_\Omega |u|^2\,dx}. \label{RQ}
\end{equation}
Here $|D^2u|=(\sum_{jk}u_{x_jx_k}^2)^{1/2}$ is the Hilbert-Schmidt norm of the Hessian matrix $D^2u$ of $u$, and $Du$ denotes the gradient vector.

Physically, when $d=2$ the region $\Omega$ is the shape of a homogeneous, isotropic plate. The parameter $\tau$ represents the ratio of lateral tension to flexural rigidity of the plate; for brevity we refer to $\tau$ as the tension parameter. Positive $\tau$ corresponds to a plate under tension, while taking $\tau$ negative would give us a plate under compression. The function $u$ describes a transverse vibrational mode of the plate, and the Rayleigh quotient $Q[u]$ gives the bending energy of the plate.

From the Rayleigh quotient \eqref{RQ}, we will derive the partial differential equation and boundary conditions governing the vibrational modes of a free plate. The critical points of \eqref{RQ} are the eigenstates for the plate satisfying the free boundary conditions and the critical values are the corresponding eigenvalues. The equation is:
\begin{equation}
\Delta \Delta u - \tau \Delta u = \omega u, \label{maineq}
\end{equation}
where $\omega$ is the eigenvalue, with the natural (\emph{i.e.}, unconstrained or ``free'') boundary conditions on $\partial\Omega$:
\begin{align}
&Mu := \frac{\partial^2 u}{\partial n^2}= 0 \label{BC1}\\
&Vu := \tau\frac{\partial u}{\partial n}-\sdiv\left(\sproj\left[(D^2u)n\right]\right)-\frac{\partial(\Delta u)}{\partial n} = 0\label{BC2}
\end{align}
Here $n$ is the outward unit normal to the boundary and $\sdiv$ and $\sgrad$ are the surface divergence and gradient. The operator $\sproj$ projects onto the space tangent to $\partial\Omega$.

We will prove in a later section that the spectrum of the Rayleigh quotient $Q$ is discrete, consisting entirely of eigenvalues with finite multiplicity:
\[
 0=\omega_0<\omega_1\leq\omega_2\leq\dots\rightarrow\infty.
\]
 We also have a complete $L^2$-orthonormal set of eigenfunctions $u_0\equiv$ const, $u_1$, $u_2$, and so forth.

We call $u_1$ the \emph{fundamental mode} and the eigenvalue $\omega_1$ the \emph{fundamental tone}; the latter can be expressed using the Rayleigh-Ritz variational formula:
\[
\omega_1(\Omega)=\min\{Q[u]:u\in H^2(\Omega), \int_\Omega u\,dx=0\}.
\]
In general, the $k$th eigenvalue is the minimum of $Q[u]$ over the space of all functions $u$ $L^2$-orthogonal to the eigenfunctions $u_0$, $u_1$,$\dots$, $u_{k-1}$. Because $u_0$ is the constant function, the condition $u\perp u_0$ can be written $\int_\Omega u\,dx=0$. Note that in the limiting case $\tau=0$, the first $d+1$ eigenvalues of $\Omega$ are trivial because $Q[u]=0$ for all linear functions $u$. We therefore need the tension parameter $\tau$ to be positive in order to have a nontrivial inequality.

The eigenvalue equation \eqref{maineq} can also be obtained by separating the plate wave equation
\[
\phi_{tt}=-\Delta\Delta\phi+\tau\Delta\phi,
\]
by the separation $\phi(x,t)=u(x)\cos(\sqrt{\omega}t)$. The eigenvalue $\omega$ is therefore the square of the frequency of vibration of the plate. 

\section{\bf Main result}
The main goal of this paper is to prove an isoperimetric inequality for the fundamental tone of the free plate under tension. Let $\Omega^*$ denote the ball with the same volume as our region $\Omega$.

\begin{thm}\label{thm1} For all smoothly bounded regions of a fixed volume, the fundamental tone of the free plate with a given positive tension is maximal for a ball. That is, if $\tau>0$ then the first nonzero eigenvalue $\omega$ of $\Delta\Delta u-\tau\Delta u=\omega u$ subject to the natural boundary conditions \eqref{BC1} and \eqref{BC2} satisfies
\begin{equation}
\omega_1(\Omega) \leq \omega_1(\Omega^*), \qquad\text{with equality if and only if $\Omega$ is a ball.} \label{isoineq}
\end{equation}
\end{thm}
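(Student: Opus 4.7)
The plan is to adapt Weinberger's variational method for the free membrane to this fourth-order setting. Normalize so that $|\Omega|=|\Omega^*|$ and let $R^*$ be the radius of $\Omega^*$. According to the companion paper \cite{chasman}, the fundamental modes of $\Omega^*$ span a $d$-dimensional eigenspace consisting of
\[
u_i^*(x)=\rho(r)\,\frac{x_i}{r},\qquad i=1,\dots,d,
\]
with common eigenvalue $\omega_1(\Omega^*)$; the radial profile $\rho$ is an explicit linear combination of a Bessel function and a modified Bessel function solving the $\ell=1$ radial reduction of \eqref{maineq} subject to $Mu=Vu=0$ at $r=R^*$.

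First, I would produce $d$ trial functions on $\Omega$ by extending $\rho$ past $R^*$ to a $C^2$ function $\tilde\rho$ on $[0,\infty)$, using the explicit extension formula provided in \cite{chasman}; this guarantees $\tilde u_i(x):=\tilde\rho(r)x_i/r\in H^2(\RR^d)$ and is tailored so that the radial function $g$ defined below is monotone. Next, by the standard Brouwer fixed-point / center-of-mass argument, I would translate $\Omega$ so that $\int_\Omega\tilde u_i\,dx=0$ for every $i$. After this translation the $\tilde u_i$ are admissible in the Rayleigh--Ritz characterization of $\omega_1(\Omega)$, so
\[
\omega_1(\Omega)\int_\Omega\tilde u_i^2\,dx\;\le\;\int_\Omega\bigl(|D^2\tilde u_i|^2+\tau|D\tilde u_i|^2\bigr)dx,\qquad i=1,\dots,d.
\]

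Summing in $i$ collapses everything to a radial integral: rotational invariance gives $\sum_i\tilde u_i^2=\tilde\rho(r)^2$ and $\sum_i\bigl(|D^2\tilde u_i|^2+\tau|D\tilde u_i|^2\bigr)=F(r)$, where $F$ is a specific expression in $\tilde\rho,\tilde\rho',\tilde\rho''$. Evaluating the same identity on $\Omega^*$ gives the exact equality $\int_{\Omega^*}F\,dx=\omega_1(\Omega^*)\int_{\Omega^*}\tilde\rho^2\,dx$, since the unextended $u_i^*$ are genuine eigenfunctions of $\Omega^*$. The isoperimetric inequality will therefore follow from the rearrangement comparison
\[
\int_\Omega g(r)\,dx\;\le\;\int_{\Omega^*}g(r)\,dx=0,\qquad g(r):=F(r)-\omega_1(\Omega^*)\tilde\rho(r)^2,
\]
which is an immediate consequence of $g$ being radially non-increasing on $[0,\infty)$ together with the volume constraint $|\Omega|=|\Omega^*|$: points of $\Omega\setminus\Omega^*$ carry smaller values of $g$ than points of $\Omega^*\setminus\Omega$, and the two sets have equal measure.

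The main obstacle will be establishing that $g$ is non-increasing on $[0,\infty)$; this is a genuinely delicate statement about the particular Bessel / modified-Bessel combination making up $\rho$ and about the matching extension $\tilde\rho$ beyond $R^*$. On $[0,R^*]$ the monotonicity rests on the ODE satisfied by $\rho$, the boundary relations $\rho''(R^*)=0$ and the $V$-condition at $r=R^*$, together with positivity and recurrence identities for Bessel functions; on $[R^*,\infty)$ it is a direct check on the chosen extension. Both pieces are precisely the technical input supplied by \cite{chasman}. For the equality case I would trace strict monotonicity back through the argument: the rearrangement step is strict unless $\Omega$ and $\Omega^*$ agree up to a null set, and equality in Rayleigh--Ritz then forces each $\tilde u_i$ to be a genuine eigenfunction on $\Omega$, pinning $\Omega=\Omega^*$ up to translation.
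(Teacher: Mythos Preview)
Your overall strategy matches the paper's: Weinberger-type trial functions built from the ball's radial eigenfunction (extended linearly, hence $C^2$ thanks to the boundary condition $R''(R^*)=0$), centered via Brouwer, summed over $i$ to obtain radial integrals, then compared by rearrangement. The substantive divergence is in the rearrangement step. You package numerator and denominator into the single function $g(r)=F(r)-\omega_1(\Omega^*)\tilde\rho(r)^2$ and assert that $g$ is non-increasing on $[0,\infty)$. The paper instead treats the two pieces separately: $\tilde\rho^2$ is shown to be strictly increasing, while $F=N[\rho]$ is shown to satisfy only a weaker \emph{partial} monotonicity---values at radii $r<R^*$ exceed values at radii $r>R^*$---not global monotonicity.

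The distinction is not cosmetic. The term $(\rho'')^2$ appearing in $F$ vanishes at $r=0$ and $r=R^*$ and is strictly positive in between (Lemma~\ref{gppneg}), so $F$ itself is certainly not monotone on $[0,R^*]$; whether subtracting $\omega_1(\Omega^*)\tilde\rho^2$ rescues full monotonicity of $g$ is not addressed by the analysis in this paper or in \cite{chasman}, and would need its own proof. The paper sidesteps this by noting that $(\rho'')^2$ still obeys the partial monotonicity condition (positive inside the ball, zero outside) and then proving that the remaining pieces of $F$ are genuinely decreasing. That last step---showing in particular that $h(r)=3r^{-4}(\rho-r\rho')^2+\tau r^{-2}\rho^2$ is decreasing---is the main technical contribution of \emph{this} paper (Lemmas~\ref{monnum} through~\ref{poly2}), requiring a case split in $\tau$ and dimension-dependent polynomial estimates; it is not imported from \cite{chasman}. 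Your plan becomes correct if you weaken the claim to partial monotonicity of $g$ (which does follow from the paper's two separated statements), but as written it asserts more than is established and mislocates where the hard work lies.
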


\begin{figure}
\begin{center}
\includegraphics{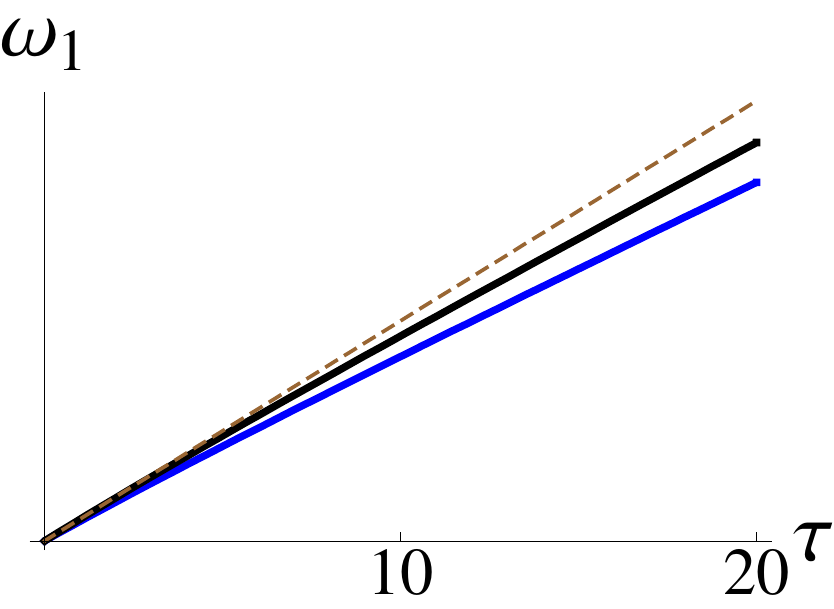}
\caption{The fundamental tone $\omega_1(\Ostar)$ of the disk (middle curve) and of another region $\omega_1(\Omega)$ (bottom curve). The dashed straight line is tangent to $\omega_1(\Ostar)$ at $\tau=0$.}
\end{center}
\end{figure}

The proof of Theorem~\ref{thm1} will proceed from a series of lemmas, following roughly this outline. A more detailed summary follows.
\begin{itemize}
\item Section 8 -- Defining the trial functions, showing concavity of the radial part of the trial function, and evaluating the Rayleigh quotient
\item Section 9 -- Proving partial monotonicity of the Rayleigh quotient
\item Section 10 -- Establishing rescaling and rearrangement results, and proving the theorem.
\end{itemize}

Adapting Weinberger's approach for the membrane \cite{W56}, we construct in Lemma~\ref{trialfcn} trial functions with radial part $\rho$ matching the radial part of the fundamental mode of the ball. We follow by proving in Lemma~\ref{gppneg} a concavity property of $\rho$ that will be needed later on. We next bound the eigenvalue $\omega$ by a quotient of integrals over our region $\Omega$, both of whose integrands are radial functions (Lemma~\ref{lemmaboundRC}). These integrands will be shown to have a "partial monotonicity". The denominator's integrand is increasing by Lemma~\ref{mondenom} and the numerator's integrand satisfies a decreasing partial monotonicity condition by Lemma~\ref{monnum}.

The proof of Lemma~\ref{monnum} becomes rather involved and so is contained in its own section and broken into two cases, Lemma~\ref{largetau} for large $\tau$ values, and Lemma~\ref{smalltau} for small values of $\tau$. The latter in turn requires some facts about particular polynomials, proved in Lemmas~\ref{poly1} and~\ref{poly2}. We then exploit partial monotonicity to see that the quotient of integrals is bounded above by the quotient of the same integrals taken over $\Ostar$, by Lemma~\ref{monint}. Finally, we conclude that the quotient of integrals on $\Ostar$ is in fact equal to the eigenvalue $\ostar$ of the unit ball. From there we deduce the theorem.

\section{\bf Existence of the spectrum and regularity of solutions}\label{spectrum}
Our first task is to investigate the spectrum of the fourth-order operator associated with our Rayleigh quotient $Q$ in \eqref{RQ}. In this section we show the spectrum is entirely discrete, with an associated weak eigenbasis. We will then establish regularity of the eigenfunctions up to the boundary and derive the natural boundary conditions.

For this section only we will allow $\tau$ to be any real number. We continue to require $\Omega\subset\RR^d$ to be smoothly bounded unless otherwise stated.

\subsection*{The existence of the spectrum}
We consider the sesquilinear form
\begin{align*}
a(u,v) &=\int_\Omega\sum_{i,j=1}^d\overline{u_{x_ix_j}}v_{x_ix_j}+\tau(\overline{D u}\cdot D v)\,dx
\end{align*}
in $L^2(\Omega)$ with form domain $H^2(\Omega)$. Note the plate Rayleigh quotient $Q$ can be written in terms of $a$, with $Q[u]=a(u,u)/\|u\|_{L^2}^2$.

\begin{prop} \label{spect}The spectrum of the operator $\Delta^2-\tau\Delta$ associated with the form $a(\cdot,\cdot)$ consists entirely of isolated eigenvalues of finite multiplicity $\omega_0\leq\omega_1\leq\omega_2\leq\dots\to\infty$. There exists an associated set of real-valued weak eigenfunctions which is an orthonormal basis for $L^2(\Omega)$.
\end{prop}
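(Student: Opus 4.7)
The plan is to apply the classical Friedrichs representation / Lax-Milgram machinery to the sesquilinear form $a(\cdot,\cdot)$, viewed as a closed, symmetric, semibounded form on $L^2(\Omega)$ with form domain $H^2(\Omega)$. Symmetry is immediate, and densely-definedness follows from $C^\infty(\overline\Omega)\subset H^2(\Omega)$. The key technical step is coercivity of a shifted form. For any $\epsilon>0$ the interpolation inequality
\[
\|Du\|_{L^2}^2 \le \epsilon\|D^2u\|_{L^2}^2 + C_\epsilon\|u\|_{L^2}^2,
\]
valid on a smoothly bounded domain (Gagliardo--Nirenberg, or integration by parts plus the trace theorem), lets me absorb the possibly signed term $\tau|Du|^2$ into $|D^2u|^2$ at the cost of a large $L^2$ penalty. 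Choosing $\epsilon$ so that $|\tau|\epsilon<1$ and then $k$ large enough yields
\[
a(u,u)+k\|u\|_{L^2}^2 \;\geq\; c\|u\|_{H^2}^2, \qquad u\in H^2(\Omega),
\]
with $c>0$. This in particular shows the shifted form $a_k(u,v):=a(u,v)+k(u,v)_{L^2}$ has form norm equivalent to $\|\cdot\|_{H^2}$, so $a_k$ (and hence $a$) is closed on $L^2(\Omega)$.

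Next I would invoke the Friedrichs representation theorem: the closed semibounded symmetric form $a_k$ defines a self-adjoint operator $A_k$ on $L^2(\Omega)$ with $A_k\geq c>0$, so $A_k$ has bounded inverse $A_k^{-1}:L^2(\Omega)\to L^2(\Omega)$ whose range lies in the form domain $H^2(\Omega)$, and the operator norm bound $\|A_k^{-1}f\|_{H^2}\le C\|f\|_{L^2}$ follows from coercivity. Composing with the Rellich--Kondrachov compact embedding $H^2(\Omega)\hookrightarrow L^2(\Omega)$ for smoothly bounded $\Omega$, I obtain that $A_k^{-1}$ is a compact self-adjoint operator on $L^2(\Omega)$.

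The Hilbert--Schmidt spectral theorem then yields an $L^2$-orthonormal basis of eigenfunctions $\{u_n\}$ of $A_k^{-1}$ with real eigenvalues $\mu_n\to 0^+$. Each $u_n$ is simultaneously a weak eigenfunction of the original operator $\Delta^2-\tau\Delta=A_k-k$ with eigenvalue $\omega_n=\mu_n^{-1}-k$, and these $\omega_n$ form an increasing sequence of isolated eigenvalues of finite multiplicity tending to $+\infty$. Real-valuedness is automatic: because $a$ has real coefficients, the real and imaginary parts of any complex weak eigenfunction are themselves weak eigenfunctions with the same eigenvalue, so one may pick a real orthonormal basis within each finite-dimensional eigenspace via Gram--Schmidt.

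The main obstacle I anticipate is the coercivity step, specifically producing the interpolation inequality on a general smooth domain without relying on zero boundary data (we are working in all of $H^2(\Omega)$, not $H^2_0$). This is standard but not entirely routine, and the argument must respect that $\tau$ is allowed to be negative or zero in this section; once past this point, the rest of the proof is the usual abstract packaging and poses no essential difficulty.
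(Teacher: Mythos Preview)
Your proposal is correct and follows essentially the same route as the paper: establish coercivity of the shifted form via an interpolation inequality controlling $\|Du\|_{L^2}$ by $\|D^2u\|_{L^2}$ and $\|u\|_{L^2}$ (the paper cites Theorem~7.28 of Gilbarg--Trudinger for this, and treats $\tau>0$ separately since then no interpolation is needed), then invoke the compact embedding $H^2(\Omega)\hookrightarrow L^2(\Omega)$ to obtain a discrete spectrum with an orthonormal eigenbasis, and finally pass to real eigenfunctions by conjugation. The only cosmetic difference is that the paper quotes a packaged result (Showalter, Corollary~7.D) in place of your explicit Friedrichs/compact-resolvent argument, but the content is the same.
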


\begin{proof}
By Cauchy-Schwarz, the form $a(\cdot,\cdot)$ is bounded on $H^2(\Omega)$, and so is continuous. We will show the quadratic form $a(u,u)$ is coercive; that is, for some positive constants $c_1$ and $c_2$  $a(u,u)+c_1\|u\|^2\geq c_2\|u\|^2_{H^2(\Omega)}$. By the boundedness of $a$ on $H^2$, this is equivalent to showing the norm associated with $a$, 
\[
 \|u\|_a^2=a(u,u)+c_1\|u\|^2
\]
is equivalent to $\|\cdot\|^2_{H^2(\Omega)}$, and hence $a$ is closed on $H^2(\Omega)$. Because $\Omega$ is smoothly bounded, $H^(\Omega)$ and $L^2(\Omega)$ can be extended to $H^2(\RR^d)$ and $L^(\RR^d)$ respectively. The space $H^2(\RR^d)$ is compactly embedded in $L^(\RR^d)$. Then by a standard result (see e.g., Corollary 7.D \cite[p. 78]{SH77}), the form $a$ has a set of weak eigenfunctions which is an orthonormal basis for $L^2(\Omega)$, and the corresponding eigenvalues are of finite multiplicity and satisfy
\begin{equation}\label{eigenvalueineq}
 \omega_1\leq\omega_2\leq \dots\leq\omega_n\rightarrow\infty\quad\text{as}\quad n\rightarrow\infty.
\end{equation}

For $\tau>0$, coercivity of the form $a$ is easily proved:
\begin{align*}
a(u,u) + \tau \|u\|^2&\geq \|D^2u\|^2+\tau\|D u\|^2 +\tau \|u\|^2\\
&\ge \min(\tau,1)\|u\|_{H^2}^2,
\end{align*}
where all unlabeled norms are $L^2$ norms on $\Omega$. 

To prove coercivity when $\tau \leq 0$, we must somehow arrive at a positive constant in front of the $|Du|^2$ term. We cannot use Poincar\'e's inequality on the $|D^2u|$ term as this will introduce terms involving the average value of $Du$. Instead, we will exploit an interpolation inequality.

By Theorem 7.28 of \cite[p. 173]{GT}, we have that for any index $1\leq j\leq n$ and any $\epsilon>0$,
\begin{equation}
\|\partial_{x_j}u\|_{L^2(\Omega)}^2 \leq \epsilon\|u\|_{H^2(\Omega)}^2+C\epsilon^{-1}\|u\|_{L^2(\Omega)}^2
\end{equation}
with $C=C(\Omega)$ a constant. Replacing $\epsilon$ by $\epsilon/d$ and summing over $j$, we see
\begin{equation*}
\|D^2u\|_{L^2}^2 \geq \left(\frac{1}{\epsilon}-1\right)\|Du\|_{L^2}^2
-\left(\frac{C}{\epsilon^2}+1\right)\|u\|_{L^2}^2.
\end{equation*}
Fix $\delta\in(0,1)$. Let $K > 0$. Then
\begin{align*}
a(u,u)&+K\|u\|_{L^2}^2= \|D^2u\|_{L^2}^2-|\tau|\|Du\|^2_{L^2}+K\|u\|_{L^2}^2\\
&\geq (1-\delta)\|D^2u\|_{L^2}^2+\left(\frac{\delta}{\epsilon}-\delta-|\tau|\right)\|Du\|^2_{L^2}+\left(K-\frac{C\delta}{\epsilon^2}-\delta\right)\|u\|_{L^2}^2\\
&\geq \min\left\{1-\delta,\frac{\delta}{\epsilon}-\delta-|\tau|,K-\frac{C\delta}{\epsilon^2}-\delta\right\}\|u\|_{H^2},
\end{align*}
We can choose our $\epsilon$ small and our $K$ large so that the minimum is positive, which proves coercivity. For example, for $\delta=1/2$, we need to take $\epsilon<1/(1+2|\tau|)$ and $K>\frac{1}{2}\Big(C+1+2|\tau|\Big)$. Thus $a$ is coercive for all $\tau$.

Now suppose $u$ is a weak eigenfunction corresponding to eigenvalue $\omega$. Because $\omega$ is real-valued, by taking the complex conjugate of the weak eigenvalue equation we see that $\ubar$ is also a weak eigenfunction with the same eigenvalue. Thus the real and imaginary parts of $u$ are both eigenfunctions associated with $\omega$, and we may choose our eigenfunctions to be real-valued.
\end{proof}

Note that for any bounded region $\Omega$ and all real values of $\tau$, the constant function solves the weak eigenvalue equation with eigenvalue zero. For all nonnegative values of $\tau$, the Rayleigh quotient is nonnegative for all functions and so $0=w_0\leq w_1\leq\cdots\leq w_k\leq\dots$.  When $\tau=0$, the coordinate functions $x_1,\dots,x_d$ are also solutions with eigenvalue zero, and so the lowest eigenvalue is at least $d+1$-fold degenerate, as noted in the introduction. Taking instead $\tau>0$, the Raleigh quotient shows that the fundamental tone $\omega_1$ is positive, and so we have:
\[
 0=\omega_0<\omega_1\leq\omega_2\leq\cdots\leq\omega_n\rightarrow\infty\quad\text{as}\quad n\rightarrow\infty.
\]

\subsection*{Regularity}
We aim to establish regularity of the weak eigenfunctions by appealing to interior and boundary regularity theory for elliptic operators. 

\begin{prop} \label{regprop} For any $\tau \in\RR$ and smoothly bounded $\Omega$, the weak eigenfunctions of $\Delta\Delta-\tau\Delta$ are smooth on $\overline{\Omega}$. 
\end{prop}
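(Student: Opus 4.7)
My plan is to prove smoothness in two stages: interior regularity by standard elliptic bootstrapping on the distributional PDE that $u$ satisfies in $\Omega$, and boundary regularity by Nirenberg's tangential-difference-quotient method applied directly to the variational equation, so that I do not need to invoke the natural boundary conditions (which the paper derives only afterwards).

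\textbf{Interior.} I would first test the weak equation $a(u,v)=\omega\langle u,v\rangle_{L^2}$ against arbitrary $v\in C_c^\infty(\Omega)$. Two integrations by parts (with no boundary contributions) show that $u$ satisfies $\Delta^2 u-\tau\Delta u=\omega u$ as a distribution on $\Omega$. Since the principal symbol $|\xi|^4$ is elliptic, the classical interior $L^2$-regularity estimate for fourth-order elliptic operators with constant coefficients gives $u\in H^{k+4}_{\mathrm{loc}}(\Omega)$ whenever $\omega u\in H^k_{\mathrm{loc}}(\Omega)$. Starting from $u\in H^2(\Omega)\hookrightarrow L^2_{\mathrm{loc}}$ and iterating yields $u\in H^k_{\mathrm{loc}}$ for every $k$, hence $u\in C^\infty(\Omega)$ by Sobolev embedding.

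\textbf{Boundary.} I would work locally near a boundary point. Flattening $\partial\Omega$ by a smooth diffeomorphism sends $\Omega$ to an upper half-ball in $\RR^d$, and the pushed-forward bilinear form remains continuous and coercive on $H^2$, with principal part still fourth-order elliptic plus smooth lower-order perturbations coming from the Jacobian. Fix a cutoff $\zeta\in C_c^\infty$ supported near the flattened boundary. For a tangential index $k<d$ and small $h\neq 0$, the test function $v_h=D_k^{-h}(\zeta^2 D_k^h u)$ belongs to $H^2$, since tangential translations preserve $H^2$ of the half-space, and is legitimate to substitute into the weak equation. The standard estimates for difference quotients, combined with the coercivity bound from Proposition~\ref{spect}, yield an $h$-uniform bound on $\|\zeta\,D_k^h u\|_{H^2}$. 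Letting $h\to 0$ gives $\partial_{x_k} u\in H^2$ locally up to the flattened boundary for every tangential $k$; solving the PDE algebraically for the pure normal derivative $\partial_{x_d}^4 u$ in terms of mixed and lower-order derivatives then promotes $u$ to $H^3$ up to the boundary.

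\textbf{Iteration and main obstacle.} Repeating the tangential-difference-quotient step and, at each round, using $\Delta^2 u-\tau\Delta u=\omega u$ to trade newly-acquired tangential regularity for normal regularity, produces $u\in H^k$ near the boundary for every $k$, so $u\in C^\infty(\overline\Omega)$ by Sobolev embedding. I expect the principal obstacle to be bookkeeping rather than anything conceptual: every iteration generates additional lower-order commutator terms from the flattening diffeomorphism and from the difference-quotient operators, and these must be absorbed into the coercivity estimate and into norms already controlled by the inductive hypothesis. One must check at each stage that no new term grows faster than the available coercivity can control, but this is routine once the base case is set up cleanly.
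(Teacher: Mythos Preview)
Your strategy coincides with the content of the Nirenberg theorem that the paper simply cites: the paper invokes \cite[p.\ 668]{Nir55} as a black box to obtain $u\in H^k(\Omega)$ for every $k$, and then concludes $u\in C^\infty(\overline\Omega)$ by Sobolev embedding together with the Trace Theorem. So you are not taking a different route; you are sketching the proof of the cited result.

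There is, however, a genuine inaccuracy in your recovery of normal regularity, and it is precisely the place where fourth-order problems differ from second-order ones. After a single round of tangential difference quotients you obtain $\partial_{x_k}u\in H^2$ for tangential $k$, which controls all \emph{third}-order derivatives of $u$ carrying at least one tangential index. It does \emph{not} control the fourth-order mixed terms $\partial_{x_k}^2\partial_{x_d}^2u$ that appear when you solve the flattened equation for $\partial_{x_d}^4u$; and even if it did, knowing $\partial_{x_d}^4u\in L^2$ would not by itself give $\partial_{x_d}^3u\in L^2$ and hence $u\in H^3$. The correct mechanism for a $2m$-th-order variational problem is to iterate the tangential differencing (here at least twice) until every derivative of order $2m$ with at most $m$ normal indices lies in $L^2$; ellipticity then yields $\partial_{x_d}^{2m}u\in L^2$, and the intermediate pure normal derivatives are recovered by one-dimensional interpolation in the $x_d$-variable. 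Your second-order intuition---one tangential step, then the PDE closes the loop---does not transfer directly. Once this bookkeeping is corrected the bootstrap closes and your argument is exactly Nirenberg's.
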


\begin{proof}
Let $u$ be a weak eigenfunction of $A$ with associated eigenvalue $\omega$; by Proposition~\ref{spect} we have $u\in\DD(a)=H^2(\Omega)$. Then by a theorem in \cite[p 668]{Nir55}, we have $u\in H^k(\Omega)$ for every positive integer $k$. Thus we have $u\in H^k(\Omega)$ for all $k\in \mathbb{Z}^+$, and so $u\in C^\infty(\Omega)$.

Regularity on the boundary follows from global interior regularity and the Trace Theorem (see, for example, \cite[Prop 4.3, p. 286 and Prop 4.5, p. 287.]{taylor}). Thus we have $u\in C^\infty(\overline{\Omega})$, as desired.
\end{proof}

\section{\bf The Natural Boundary Conditions}
In this section, our goal is to derive the form of the natural boundary conditions necessarily satisfied by all eigenfunctions. Consider the weak eigenvalue equation for eigenfunction $u$ with eigenvalue $\omega$ and some test function $\phi\in C^\infty_c(\Omega)$:
\[
 \int_\Omega \sum_{i,j}u_{x_ix_j}\phi_{x_ix_j}+\tau Du\cdot D\phi -\omega u\phi\,dx.
\]
Because the eigenfunction $u$ is smooth, we may use integration by parts to move most of the derivatives on $\phi$ to $u$; this gives us a volume integral and two surface integrals that must vanish for all $\phi$.

The natural boundary conditions are rather complicated in higher dimensions, and so we state the two-dimensional case first. The boundary conditions in this case have been known for some time: see, for example, \cite{RW74}
\begin{prop} \label{2dimBC}(Two dimensions)
For $\Omega\subset\RR^2$, the natural boundary conditions for eigenfunctions of the free plate under tension have the form
\begin{align*}
&Mu := \frac{\partial^2 u}{\partial n^2} = 0 \\
&Vu :=\tau \frac{\partial u}{\partial n}-\frac{\partial (\Delta u)}{\partial n} - \frac{\partial }{\partial s} \left(\frac{\partial^2u}{\partial s\partial n}-K(s)\frac{\partial u}{\partial s}\right)= 0
\end{align*}
where $n$ denotes the outward unit normal derivative, $s$ the arclength, and $K$ the curvature of $\partial\Omega$.
\end{prop}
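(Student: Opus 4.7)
The plan is to start from the weak eigenvalue equation, which holds not merely against compactly supported test functions but against any $\phi\in H^2(\Omega)$ (and in particular against $\phi\in C^\infty(\overline\Omega)$) by density. Smoothness of $u$ up to the boundary, established in Proposition~\ref{regprop}, lets me freely integrate by parts. I would integrate by parts twice in each term, pushing all derivatives off $\phi$ and onto $u$. The bulk integral will read $\int_\Omega (\Delta^2 u-\tau\Delta u-\omega u)\phi\,dx$, which vanishes because $u$ satisfies \eqref{maineq}. What remains are surface integrals on $\partial\Omega$, and the claim reduces to identifying the coefficients multiplying $\phi$ and $\partial_n\phi$ there.

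Explicitly, the $\tau|Du|^2$ term yields one boundary contribution $\tau\,\partial_n u\cdot\phi$, together with $-\tau\Delta u\cdot\phi$ in the bulk. The Hessian term produces a bulk piece $\Delta^2 u\cdot\phi$ together with two boundary pieces: $-\partial_n(\Delta u)\cdot\phi$ and $\sum_{i,j}u_{x_ix_j}n_i\phi_{x_j}=((D^2u)n)\cdot D\phi$. The last expression is the only one that is not already in the desired form, so the core of the proof is rewriting it.

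On $\partial\Omega$ decompose $D\phi=(\partial_n\phi)\,n+(\partial_s\phi)\,t$, where $t$ is the unit tangent to the boundary curve. The normal part contributes $n^\TT (D^2u)\,n\cdot\partial_n\phi$, and since the second derivative of $u$ along lines in the direction $n$ coincides with $\partial^2u/\partial n^2=Mu$, this gives the $Mu\cdot\partial_n\phi$ term. The tangential part contributes $t^\TT(D^2u)n\cdot\partial_s\phi$, and the key identity I need is
\[
t^\TT(D^2u)\,n=\frac{\partial^2u}{\partial s\,\partial n}-K(s)\frac{\partial u}{\partial s}.
\]
I would establish this by differentiating $\partial_n u=(Du)\cdot n$ along the arclength parameter: $\partial_s\partial_n u=t_iu_{x_ix_j}n_j+u_{x_j}\partial_s n_j$, and then using the 2D Frenet formula $\partial_s n=K t$ (for the outward normal with the standard counterclockwise orientation) to identify the second term as $K\,\partial_s u$. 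Solving for $t^\TT(D^2u)n$ gives the identity.

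With that identity in hand, the tangential piece becomes $\int_{\partial\Omega}(\partial_s\partial_n u-K\partial_s u)\,\partial_s\phi\,ds$, and since $\partial\Omega$ is a closed curve I can integrate by parts in $s$ with no boundary terms, producing $-\int_{\partial\Omega}\partial_s(\partial_s\partial_n u-K\partial_s u)\phi\,ds$. Collecting everything, the surface terms read
\[
\int_{\partial\Omega}Mu\cdot\partial_n\phi\,ds+\int_{\partial\Omega}Vu\cdot\phi\,ds=0
\]
with $Mu$ and $Vu$ as in the statement. To conclude, I appeal to the fact that on a smooth boundary the Dirichlet and Neumann traces of $\phi\in C^\infty(\overline\Omega)$ may be prescribed independently (by choosing $\phi$ to match arbitrary smooth data in a collar neighborhood), so each surface integrand must vanish separately, giving $Mu=0$ and $Vu=0$ on $\partial\Omega$. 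The main obstacle is really bookkeeping: keeping the curvature sign convention consistent with the outward-normal Frenet formulas and ensuring the splitting $D\phi=(\partial_n\phi)n+(\partial_s\phi)t$ is executed cleanly enough that the two independent pieces emerge in the form claimed.
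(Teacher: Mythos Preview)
Your proposal is correct and follows the same overall architecture as the paper: integrate by parts in the weak eigenvalue equation, split the boundary term $(D^2u)n\cdot D\phi$ into its normal and tangential parts, and then integrate by parts in $s$ along the closed curve. The difference lies in how the key identity
\[
t^\TT(D^2u)\,n=\frac{\partial^2u}{\partial s\,\partial n}-K(s)\frac{\partial u}{\partial s}
\]
is obtained. The paper first proves the general $d$-dimensional boundary conditions (Proposition~\ref{generalBC}) and then specializes, computing $t^\TT(D^2u)n$ by invoking the explicit change-of-coordinates formulas for $u_{xx}$, $u_{yy}$, $u_{xy}$ in the $(n,s)$ frame from Weinstock \cite{RW74}, with the angle parametrization $t=(\cos\alpha,\sin\alpha)$, $n=(\sin\alpha,-\cos\alpha)$, $K=\alpha'$. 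You instead derive the identity directly and intrinsically by differentiating $\partial_n u=(Du)\cdot n$ along the curve and applying the Frenet relation $\partial_s n=Kt$. Your route is shorter and more geometric, avoiding the somewhat heavy coordinate computation; the paper's route has the advantage of being a specialization of an already-proved $d$-dimensional result and of citing a classical reference for the coordinate formulas. Either way the argument is sound, and your sign convention for $\partial_s n=Kt$ is consistent with the paper's choice of outward normal.
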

We also look at one example of the natural boundary conditions for a region with corners. Notice that an additional condition arises at the corners!
\begin{prop} (Rectangular region in two dimensions) \label{rectBC}
When $\Omega\subset\RR^2$ is a rectangular region with edges parallel to the coordinate axes, the natural boundary conditions for eigenfunctions of the free plate under tension have the form
\begin{align*}
&\frac{\partial^2 u}{\partial n^2} = 0 \quad\text{at each edge}\\
&\tau \frac{\partial u}{\partial n}-\frac{\partial^3 u}{\partial s^2\partial n}-\frac{\partial (\Delta u)}{\partial n}=0\qquad\text{on each edge}\\
&u_{xy}=0 \qquad\text{at each corner}
\end{align*}
where $n$ and $s$ indicate the normal and tangent directions.
\end{prop}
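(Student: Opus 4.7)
The plan is to take the weak eigenvalue identity $a(u,\phi)=\omega\int_\Omega u\phi\,dx$ for arbitrary $\phi\in H^2(\Omega)$ and to integrate by parts twice in the Hessian term, moving derivatives from $\phi$ onto $u$. Since each edge of the rectangle is smooth and $u$ is smooth up to each closed edge, this step is valid edge by edge. The result is a volume integral producing equation \eqref{maineq} together with boundary integrals
\[
\int_{\partial\Omega}\left(\tau\frac{\partial u}{\partial n}\phi-\frac{\partial(\Delta u)}{\partial n}\phi+\sum_{i,j}u_{x_ix_j}n_j\phi_{x_i}\right)dS.
\]
On each edge the outward normal is a coordinate direction $\pm e_k$, so after decomposing $\phi_{x_i}$ into its normal and tangential components the Hessian boundary term splits cleanly into $u_{nn}\phi_n+u_{ns}\phi_s$, where the subscripts $n$ and $s$ denote normal and tangential derivatives along that edge.

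Next I integrate by parts once more along each straight edge in the tangential variable $s$ in order to remove the $\phi_s$ factor from the $u_{ns}\phi_s$ term. On a smooth closed boundary this would produce no endpoint contributions, but on a straight edge running from one corner to another it yields $-\int_{\text{edge}}(u_{ns})_s\phi\,ds+[u_{ns}\phi]_{\text{endpoints}}$, with $(u_{ns})_s=\partial^3 u/\partial s^2\partial n$. Restricting first to test functions supported in the open interior of a single edge (so the corner endpoints play no role) and using that both $\phi$ and $\phi_n$ can be prescribed independently there via a standard $H^2$ trace-lifting argument, I read off the two edge conditions $\partial^2 u/\partial n^2=0$ and $\tau\,\partial u/\partial n-\partial^3 u/\partial s^2\partial n-\partial(\Delta u)/\partial n=0$.

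The corner condition then falls out of the endpoint terms $[u_{ns}\phi]_{\text{corners}}$. At each of the four corners two adjacent edges meet, and each edge contributes a term of the form $\pm u_{x_1x_2}(\text{corner})\phi(\text{corner})$ from its tangential integration by parts. A careful accounting of the outward normals and tangent orientations shows that the two contributions at each corner reinforce rather than cancel, yielding $2u_{x_1x_2}(\text{corner})\phi(\text{corner})$. Choosing $\phi$ to be a smooth bump concentrated near one corner with prescribed value there then forces $u_{xy}=0$ at that corner. The main obstacle is precisely this orientation bookkeeping: one must verify that the two endpoint contributions genuinely reinforce rather than cancel, which is the mechanism responsible for the extra corner boundary condition beyond those of the smooth case in Proposition~\ref{2dimBC}.
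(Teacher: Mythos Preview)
Your proposal is correct and follows essentially the same route as the paper: integrate by parts twice in the Hessian term to obtain edge integrals, then integrate the mixed term $u_{ns}\phi_s$ by parts tangentially along each straight edge, producing the edge conditions plus endpoint contributions at the corners that combine to $2u_{xy}\phi$ and force $u_{xy}=0$ there. The paper carries this out explicitly on $[0,1]^2$ in Cartesian coordinates rather than in your $(n,s)$ notation, but the argument is the same.
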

Finally, we state the natural boundary conditions for a smoothly-bounded region in higher dimensions:
\begin{prop} (General) \label{generalBC} For any smoothly bounded $\Omega$, the natural boundary conditions for eigenfunctions of the free plate under tension have the form
\begin{align*}
&Mu := \frac{\partial^2 u}{\partial n^2} = 0 &\text{on $\partial\Omega$,}\\
&Vu := \tau\frac{\partial u}{\partial n}
-\sdiv\Big(\sproj\left[(D^2u)n\right]\Big)-\frac{\partial\Delta u}{\partial n} = 0 &\text{on $\partial\Omega$,}
\end{align*}
where $n$ denotes the normal derivative and $\sdiv$ is the surface divergence. The projection $\sproj$ projects a vector $v$ at a point $x$ on $\partial\Omega$ into the tangent space of $\partial\Omega$ at $x$.
\end{prop}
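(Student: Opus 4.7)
The strategy is to test the weak eigenvalue equation against an arbitrary smooth $\phi$ on $\overline{\Omega}$, not just compactly supported ones. Since Propositions~\ref{spect} and~\ref{regprop} guarantee that the eigenfunction $u$ lies in $C^\infty(\overline{\Omega})$ and that the weak equation holds for all $\phi\in H^2(\Omega)$, I integrate by parts freely and collect the boundary terms. This turns the natural boundary conditions into a pointwise consequence of the arbitrariness of $\phi|_{\partial\Omega}$ and $\partial_n\phi|_{\partial\Omega}$.

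First I transfer the derivatives off $\phi$ in the Hessian term. Writing $\sum_{i,j}\int_\Omega u_{x_ix_j}\phi_{x_ix_j}\,dx$ and integrating by parts once in $x_j$ produces a boundary term $\int_{\partial\Omega} ((D^2u)n)\cdot D\phi\,dS$ and a volume term $-\sum_i\int_\Omega (\Delta u)_{x_i}\phi_{x_i}\,dx$. A second integration by parts in $x_i$ converts the latter into $\int_\Omega (\Delta^2 u)\phi\,dx-\int_{\partial\Omega}(\partial_n\Delta u)\phi\,dS$. The $\tau$ gradient term is handled analogously, yielding $-\tau\int_\Omega(\Delta u)\phi\,dx+\tau\int_{\partial\Omega}(\partial_n u)\phi\,dS$. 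Collecting everything, the weak equation becomes
\begin{equation*}
\int_\Omega (\Delta^2 u-\tau\Delta u-\omega u)\phi\,dx + \int_{\partial\Omega}((D^2u)n)\cdot D\phi\,dS + \int_{\partial\Omega}\left(\tau\tfrac{\partial u}{\partial n}-\tfrac{\partial\Delta u}{\partial n}\right)\phi\,dS = 0.
\end{equation*}
Restricting first to $\phi\in C_c^\infty(\Omega)$ forces the PDE \eqref{maineq} pointwise, which kills the volume integral for every admissible $\phi$.

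Next I decompose $D\phi$ on $\partial\Omega$ into normal and tangential parts, $D\phi=(\partial_n\phi)n+\sgrad\phi$. The normal piece contributes $\int_{\partial\Omega}(n\cdot D^2u\cdot n)(\partial_n\phi)\,dS$, and the identity $n\cdot D^2u\cdot n=\partial^2 u/\partial n^2=Mu$ (obtained by differentiating $u$ twice along a straight line through a boundary point in the direction $n$) identifies this coefficient. For the tangential piece I note that $(D^2u)n\cdot\sgrad\phi=\sproj[(D^2u)n]\cdot\sgrad\phi$ since $\sgrad\phi$ is tangential, and then apply the surface divergence theorem on the closed manifold $\partial\Omega$ (no boundary, so no correction terms) to obtain $-\int_{\partial\Omega}\sdiv(\sproj[(D^2u)n])\phi\,dS$. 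Substituting back, the remaining boundary integrals read
\begin{equation*}
\int_{\partial\Omega}(Mu)(\partial_n\phi)\,dS + \int_{\partial\Omega}(Vu)\phi\,dS = 0
\end{equation*}
with $Vu$ exactly as in the statement.

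Finally, by the trace theorem the pair $(\phi|_{\partial\Omega},\partial_n\phi|_{\partial\Omega})$ ranges over a dense subset of $H^{3/2}(\partial\Omega)\times H^{1/2}(\partial\Omega)$ as $\phi$ varies in $H^2(\Omega)$, and the two traces can be prescribed independently. Hence both $Mu$ and $Vu$ must vanish pointwise on $\partial\Omega$. I expect the main subtlety to be the tangential step: correctly writing $(D^2u)n$ as a vector defined on $\partial\Omega$, justifying that its tangential projection is the object one should integrate by parts via $\sdiv$, and confirming that no extra curvature-type boundary terms appear (the curvature effects are absorbed into $\sdiv\sproj$, which is how the two-dimensional formula of Proposition~\ref{2dimBC} is recovered by expanding $\sdiv$ in arclength and using the Frenet formulas).
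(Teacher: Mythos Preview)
Your proposal is correct and follows essentially the same route as the paper: integrate the Hessian and gradient terms by parts to obtain the volume PDE plus two boundary integrals, split $D\phi$ on $\partial\Omega$ into normal and tangential parts, apply the surface divergence theorem to the tangential piece, and then exploit the arbitrariness of $\phi$ and $\partial_n\phi$ on the boundary. Your added remarks (the explicit identification $n^{\TT}(D^2u)n=\partial^2 u/\partial n^2$ via straight-line differentiation, and the trace-theoretic justification that $\phi|_{\partial\Omega}$ and $\partial_n\phi|_{\partial\Omega}$ may be prescribed independently) are helpful elaborations of steps the paper leaves implicit, but the argument is the same.
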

\begin{proof}[Proof of Proposition~\ref{generalBC}]
Our eigenfunctions $u$ are smooth on $\overline{\Omega}$ by Proposition 2 and satisfy the weak eigenvalue equation $a(u,\phi)-\omega(u,\phi)_{L^2(\Omega)}=0$ for all $\phi\in H^2(\Omega)$. That is,
\[
 \int_\Omega \left(\sum_{i,j=1}^du_{x_ix_j}\phi_{x_ix_j}+\tau D\phi\cdot Du-\omega u\phi\right)\,dx=0.
\]

As in the membrane case, we make much use of integration by parts. Let $n$ denote the outward unit normal to the surface $\partial\Omega$. To simplify our calculations, we consider each term separately.

The gradient term only needs one use of integration by parts:
\[
\int_\Omega Du\cdot D\phi\,dx=\surfint\phi \frac{\partial u}{\partial n}\,dS -\int_\Omega\phi(\Delta u)\,dx.
\]

The Hessian term becomes:
\begin{align*}
\int_\Omega&\sum_{i,j}u_{x_ix_j}\phi_{x_ix_j}\,dx\\
&=\surfint\left(D\phi\cdot\Big((D^2u)n\Big)-\phi\frac{\partial(\Delta u)}{\partial n}\right)\,dS +\int_\Omega(\Delta^2 u)\phi\,dx,
\end{align*}
after integrating by parts twice.

We wish to transform the term involving $D\phi$ in the above surface integral using integration by parts. Because we are on $\partial\Omega$, we must treat the normal and tangential components separately. We can then use the Divergence theorem for integration on $\partial\Omega$.

We note that the surface gradient $\sgrad$ equals $D - n\partial_n$ when applied to a function (like $\phi$) that is defined on a neighborhood of the boundary. Thus $\sgrad \phi$ gives the tangential part of the Euclidean gradient vector. Hence,
\begin{align*}
\surfint &D\phi\cdot\Big((D^2u)n\Big)\,dS \\
&= \surfint \left(n\frac{\partial\phi}{\partial n}+\sgrad\phi\right)\cdot\left(n\frac{\partial^2u}{\partial n^2}+\sproj\left[(D^2u)n\right]\right)\,dS\\
&=\surfint \frac{\partial\phi}{\partial n}\frac{\partial^2u}{\partial n^2} +\Big\langle\sgrad\phi,\sproj\left[(D^2u)n\right]\Big\rangle_{\partial\Omega}\,dS\\
&= \surfint \frac{\partial\phi}{\partial n} \frac{\partial^2u}{\partial n^2}-\phi\,\sdiv\left(\sproj\left[(D^2u)n\right]\right)\,dS,
\end{align*}
by the Divergence Theorem on the surface $\partial\Omega$. Here $\langle\cdot,\cdot\rangle_{\partial\Omega}$ denotes the inner product on the tangent space to $\partial\Omega$. Recall $\sproj$ projects a vector at a point $x$ on $\partial\Omega$ onto the tangent space of $\partial\Omega$ at $x$.

Thus for $u$ an eigenfunction associated with eigenvalue $\omega$, we see
\begin{align*}
0&=\int_\Omega\phi\Big(\Delta^2 u-\tau\Delta u-\omega u\Big)\,dx\\
&\qquad+\surfint \frac{\partial\phi}{\partial n} \frac{\partial^2u}{\partial n^2}+\phi\left(\tau\frac{\partial u}{\partial n}-\frac{\partial\Delta u}{\partial n}-\sdiv\Big(\sproj\left[(D^2u)n\right]\Big) \right)\,dS.
\end{align*}

As in the membrane case, this identity must hold for all $\phi\in H^2(\Omega)$. If we take any compactly supported $\phi$, then the volume integral must vanish; because $\phi$ is arbitrary, we must therefore have $\Delta^2 u-\tau\Delta u-\omega u=0$ everywhere. Similarly, the terms multiplied by $\phi$ and $\partial\phi/\partial n$ must vanish on the boundary. Collecting these results, we obtain the eigenvalue equation \eqref{maineq} and natural boundary conditions of Proposition~\ref{generalBC}.
\end{proof}

\begin{proof}[Proof of Proposition~\ref{2dimBC}]
Here $d=2$; take rectangular coordinates $(x,y)$. We parametrize $\partial\Omega$ by arclength $s$ and define coordinates $(n,s)$, with $n$ the normal distance from $\partial\Omega$, taken to be positive outside $\Omega$. Write $\nhat(s)$ and $\that(s)$ for the outward unit normal and unit tangent vectors to the boundary. Then $\sproj\left[f_1\nhat+f_2\that\right]=f_2\that$ and the operators $\sdiv$ and $\sgrad$ both simply take the derivative with respect to arclength $s$. That is, for a scalar function $f(s)$, and taking $t(s)$ to be the tangent vector to the surface, we have
\[
 \sgrad f(s)=f'(s) \qquad\text{and}\qquad \sdiv(f(s)\that(s))=f'(s).
\]
 and so we may write
\[
 \sdiv\Big(\sproj\left[(D^2u)n\right]\Big)= \frac{\partial}{\partial s}t^\TT(D^2u)n.
\]
The tangent line to $\partial\Omega$ at the point $(0,s)$ in our new coordinates forms an angle $\alpha=\alpha(s)$ with the $x$-axis (see
\cite[p. 230] {RW74}); the curvature of $\partial\Omega$ is given by $K(s)=\alpha^\prime(s)$. Then in rectangular coordinates, the unit tangent vector is $(\cos\alpha,\sin\alpha)$, and the outward unit normal is $(\sin\alpha,-\cos\alpha)$. Thus we have
\[
 \frac{\partial}{\partial s}t^\TT(D^2u)s =\partial_s\Big(\sin\alpha\cos\alpha(u_{xx}-u_{yy})+(\sin^2\alpha-\cos^2\alpha)u_{xy}\Big).
\]
By \cite[p. 233]{RW74}, on $\partial\Omega$ under our change of coordinates, we have
\begin{align*}
 u_{xx}&=u_{nn}\sin^2\alpha+u_{ss}\cos^2\alpha+2u_{ns}\sin\alpha\cos\alpha+Ku_n\cos^2\alpha-2Ku_s\sin\alpha\cos\alpha\\
 u_{yy}&=u_{nn}\cos^2\alpha+u_{ss}\sin^2\alpha-2u_{ns}\sin\alpha\cos\alpha+Ku_n\sin^2\alpha+2Ku_s\sin\alpha\cos\alpha\\
 u_{xy}&=-u_{nn}\cos\alpha\sin\alpha+u_{ss}\cos\alpha\sin\alpha+u_{ns}(\sin^2\alpha-\cos^2\alpha)\\
 &\qquad+Ku_n\cos\alpha\sin\alpha-Ku_s(\sin^2\alpha-\cos^2\alpha).
\end{align*}
So after simplification,
\[
\frac{\partial}{\partial t}[n^T (D^2u)t]=\frac{\partial}{\partial s}\left(\frac{\partial^2u}{\partial s\partial n}-K(s)\frac{\partial u}{\partial s}\right).
\]
This together with the results of Proposition~\ref{generalBC} yields the form of $Vu$ given in Proposition~\ref{2dimBC}. $Mu$ is unchanged, and so this completes the proof.
\end{proof}

\begin{proof}[Proof of Proposition~\ref{rectBC}]
Our previous findings do not completely apply because $\partial\Omega$ has corners, although our argument proceeds similarly. For convenience of notation, we will take $\Omega$ to be the square $[0,1]^2$.

The Hessian term gives us a condition at the corners. In particular, after integrating by parts twice, we have:
\begin{align*}
\int_\Omega &u_{xx}\phi_{xx}+2u_{xy}\phi_{xy}+u_{yy}\phi_{yy}\,dA\\
&=\int_\Omega \phi\Big(u_{xxxx}+2u_{xxyy}+u_{yyyy}\Big)\,dA\\
&\quad+\int_{0}^{1}\Big(u_{xx}\phi_x-u_{xxx}\phi+u_{xy}\phi_y-u_{xyy}\phi\Big)\Big|_{x=0}^{x=1}\,dy\\
&\quad+\int_{0}^{1}\Big(u_{yy}\phi_y-u_{yyy}\phi+u_{xy}\phi_x-u_{xxy}\phi\Big)\Big|_{y=0}^{y=1}\,dx.
\end{align*}
Since
\begin{align*}
 \int_0^1u_{xy}\phi_y\,dy=u_{xy}\phi\Big|_{y=0}^{y=1}-\int_0^1u_{xyy}\phi\,dy
\end{align*}
and 
\begin{align*}
 \int_0^1u_{xy}\phi_x\,dx=u_{xy}\phi\Big|_{x=0}^{x=1}-\int_0^1u_{xxy}\phi\,dx
\end{align*}
we obtain
\begin{align*}
 \int_\Omega &u_{xx}\phi_{xx}+2u_{xy}\phi_{xy}+u_{yy}\phi_{yy}\,dA\\
&=\int_\Omega \phi\Big(u_{xxxx}+2u_{xxyy}+u_{yyyy}\Big)\,dA\\
&\quad+\int_{0}^{1}\Big(u_{xx}\phi_x-\phi(2u_{xyy}+u_{xxx})\Big)\Big|_{x=0}^{x=1}\,dy\\
&\quad+\int_{0}^{1}\Big(u_{yy}\phi_y-\phi(2u_{xxy}+u_{yyy})\Big)\Big|_{y=0}^{y=1}\,dx\\
&\qquad+2u_{xy}\Big|_{x=0}^{x=1}\Big|_{y=0}^{y=1}.
\end{align*}

Because the Divergence Theorem does apply to regions with piecewise-smooth boundaries, the gradient term is the same as in the smooth-boundary case. The final term above is the only term that depends only on the behavior of $u$ and $\phi$ at the corners; arguing as before, we obtain the eigenvalue equation and natural boundary conditions, with the additional condition
\[
0=u_{xy}\phi\Big|_{x=0}^{1}\Big|_{y=0}^{1}.
\]
That is, we must have $u_{xy}=0$ at the corners.\end{proof}

\section*{Example: natural boundary conditions on the ball}
When $\Omega$ is a ball, we can simplify the general boundary conditions.

\begin{prop}\label{ballBC} (Ball) The natural boundary conditions in the case $\Omega=\BB(R)$, the ball of radius $R$, are
\begin{align}
&Mu := u_{rr} = 0 &\text{at $r=R$,}\label{BCb1}\\
&Vu := \tau u_r-\frac{1}{r^2}\Delta_S\left(u_r-\frac{u}{r}\right)-(\Delta u)_r = 0&\text{at $r=R$.}\label{BCb2}
\end{align}
\end{prop}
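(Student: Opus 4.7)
The plan is to specialize the general formula of Proposition~\ref{generalBC} to the case $\Omega = \BB(R)$, using the fact that the outward unit normal is simply $\nhat = \rhat$. The condition $Mu = 0$ is immediate since $\partial/\partial n = \partial/\partial r$, so $u_{nn} = u_{rr}$, and for the same reason $\partial\Delta u/\partial n = (\Delta u)_r$ in $Vu$. The only real work is to rewrite the surface-divergence term $\sdiv\big(\sproj[(D^2u)\nhat]\big)$ in terms of $\Ls$.

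First I would compute $(D^2u)\rhat$ in Cartesian coordinates. Writing $\rhat_j = x_j/r$ so that $\partial_{x_i}\rhat_j = (\delta_{ij} - \rhat_i\rhat_j)/r$, and differentiating the identity $u_r = \sum_j u_{x_j}\rhat_j$, I would obtain $\sum_j u_{x_ix_j}\rhat_j = (Du_r)_i - (u_{x_i} - \rhat_i u_r)/r$, which in vector form reads
\[
(D^2u)\rhat = Du_r - \frac{1}{r}Du + \frac{u_r}{r}\rhat.
\]
Next I would substitute the radial/tangential decompositions $Du_r = u_{rr}\rhat + \sgrad u_r$ and $Du = u_r\rhat + \sgrad u$. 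The two radial contributions $\pm u_r\rhat/r$ then cancel, leaving
\[
(D^2u)\rhat = u_{rr}\rhat + \sgrad u_r - \frac{1}{r}\sgrad u.
\]
Since $r$ is constant on the boundary sphere $r=R$, the tangential part collapses to $\sproj[(D^2u)\rhat] = \sgrad\!\left(u_r - u/r\right)$.

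Finally, I would use the identity $\sdiv\circ\sgrad = \Delta_{\partial\Omega}$ together with the fact that on the sphere of radius $r$ the Laplace--Beltrami operator equals $(1/r^2)\Ls$, yielding
\[
\sdiv\big(\sproj[(D^2u)\rhat]\big) = \frac{1}{r^2}\Ls\!\left(u_r - \frac{u}{r}\right).
\]
Substituting this into the definition of $Vu$ in Proposition~\ref{generalBC} produces exactly \eqref{BCb2}. There is no genuine obstacle; the whole proof is a short, clean computation, and the only point that requires a moment of care is tracking the $(u_r/r)\rhat$ term coming from the derivative of $\rhat$ and observing that it exactly cancels the $-u_r\rhat/r$ produced when $Du/r$ is split into radial and tangential parts.
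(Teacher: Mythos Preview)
Your proof is correct and follows essentially the same route as the paper: specialize Proposition~\ref{generalBC} by computing $(D^2u)n$ for $n=\rhat$, project tangentially, and identify $\sdiv\circ\sgrad$ with $r^{-2}\Ls$. The only cosmetic difference is that the paper recognizes $(D^2u)n$ at once as the full Euclidean gradient $D\big((ru_r-u)/R\big)$ via the identity $\sum_j u_{x_ix_j}x_j=\partial_{x_i}\big(\sum_j u_{x_j}x_j-u\big)$, so the projection step is immediate, whereas you reach the same tangential part by explicitly splitting into radial and tangential components and noting the $u_r\rhat/r$ cancellation.
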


\begin{proof}
When $\Omega$ is a ball, the normal vector to the surface at a point $x$ is $n = x/R$. Then the $i$th component of $(D^2u)n$ is given by
\[
 \sum_{j=1}^du_{x_ix_j}\frac{x_j}{R}
\]
and can be rewritten as
\[
 \frac{1}{R}\frac{\partial}{\partial x_i}\left(\sum_{j=1}^du_{x_j}x_j-u\right).
\]
Therefore,
\[
 (D^2u)n=D\left(Du\cdot\frac{x}{R}-\frac{u}{R}\right).
\]
Then the projection $\sproj$ takes the tangential component of the above gradient vector, and so
\[
 \sproj\left[(D^2u)n\right] =\grad_{\partial\BB(R)}\left(Du\cdot\frac{x}{R}-\frac{u}{R}\right).
\]
We know $\sdiv\sgrad=\Delta_{\partial\Omega}$ by definition. For the ball of radius $R$, we have $\Delta_{\partial\BB(R)}=\frac{1}{R^2}\Delta_S$. The operator $\Delta_S$ is the spherical Laplacian, consisting of the angular part of the Laplacian. It satisfies the identity $\Delta=\frac{\partial^2}{\partial r^2}+\frac{d-1}{r}\frac{\partial}{\partial r}+\frac{1}{r^2}\Delta_S$.

Thus
\begin{align*}
 \sdiv\sproj\left[(D^2u)n\right]&=\Delta_{\partial\BB(R)}\left(Du\cdot\frac{x}{R}-\frac{u}{R}\right)\\
&=\Delta_{\partial\BB(R)}\left(u_r-\frac{u}{R}\right),
\end{align*}
by noting that $Du\cdot n=\partial u/\partial r$.  The boundary conditions of Proposition~\ref{generalBC} then simplify to \eqref{BCb1} and \eqref{BCb2}, as desired.\end{proof}

\subsection*{The one-dimensional case}
The one-dimensional analog of the free plate is the free rod, represented by an interval $I=[a,b]$ on the real line. We include its boundary conditions for the sake of completeness.
 
We may derive the natural boundary conditions from the weak eigenvalue equation as before. We obtain as boundary conditions
\begin{align*}
 u''\Big|_a^b&=0\\
\tau u' - u'''\Big|_a^b&=0
\end{align*}
and the eigenvalue equation
\[
 u''''-\tau u''=\omega u.
\]
Note that these are in fact the one-dimensional analogues of the boundary conditions and eigenvalue equation obtained in Proposition~\ref{generalBC}. The computations are straightforward integrations by parts and thus omitted.

The fundamental tone of the free plate in dimensions $d\geq 2$ had simple angular dependence; the fundamental tone of the free rod under tension can be proved to be an odd function. See \cite[Chapter 7]{cthesis}. 

Note that we do not have an isoperimetric inequality for the free rod, because all connected domains of the same area are now intervals of the same length, and are identical up to translation.  

\section{\bf The fundamental tone as a function of tension}\label{fundmodtens}
Fix the smoothly bounded domain $\Omega$. We will estimate how the fundamental tone $\omega_1=\omega_1(\tau)$ depends on the tension parameter $\tau$, establishing bounds used in the proof of Theorem~\ref{thm1}. We will also examine the behavior of $\omega_1$ in the extreme case as $\tau\to\infty$. 

First we note that the Rayleigh quotient \eqref{RQ} is linear and increasing as a function of $\tau$. Our eigenvalue $\omega_1(\tau)$ is the infimum of $Q[u]$ over $u\in H^2(\Omega)$ with $\int_\Omega u\,dx=0$, and thus $\omega_1(\tau)$ is itself a concave, increasing function of $\tau$.

Next, we will prove $\omega_1(\tau)/\tau$ is bounded above and below for all $\tau>0$. Recall $\mu_1$ is the fundamental tone of the free membrane.

\begin{lemma}\label{wbounds} For all $\tau\geq 0$ we have
 \begin{equation}\label{lem31}
  \tau\mu_1 \leq\omega_1(\tau)\leq \tau \frac{|\Omega|d}{\int_\Omega |x-\bar{x}|^2 \, dx},
 \end{equation}
where $\bar{x}=\int_\Omega x\,dx/|\Omega|$ is the center of mass of $\Omega$. In particular, when $\Omega$ is the unit ball,
 \begin{equation} 
  \tau\mu_1 \leq \omega_1(\tau) \leq \tau(d+2) .\label{omegabounds}
 \end{equation}
 Furthermore, the upper bounds in \eqref{lem31} and \eqref{omegabounds} hold for all $\tau\in\RR$.
\end{lemma}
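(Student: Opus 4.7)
The plan is to obtain the two bounds by short, independent variational arguments and then specialize to the unit ball. For the lower bound ($\tau \geq 0$), I would simply discard the Hessian contribution to $Q$: for any mean-zero $u \in H^2(\Omega)$,
$$Q[u] \;\geq\; \tau\,\frac{\int_\Omega |Du|^2\,dx}{\int_\Omega u^2\,dx} \;\geq\; \tau\mu_1,$$
the second step being the Rayleigh--Ritz characterization of the free membrane fundamental tone $\mu_1$ over mean-zero test functions. Infimizing over admissible $u$ gives $\omega_1(\tau) \geq \tau\mu_1$.

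For the upper bound I would use the translated coordinate functions $u_i(x) = x_i - \bar{x}_i$, $i=1,\dots,d$, as trial functions. Each $u_i$ lies in $H^2(\Omega)$ and has mean zero by the definition of $\bar{x}$, so is a legitimate competitor in the variational characterization of $\omega_1(\tau)$ for every $\tau \in \RR$. Since $u_i$ is linear, $D^2 u_i \equiv 0$ and $|Du_i|^2 \equiv 1$, hence
$$Q[u_i] \;=\; \frac{\tau|\Omega|}{\int_\Omega (x_i-\bar{x}_i)^2\,dx}.$$
The variational inequality $\omega_1(\tau) \leq Q[u_i]$ rearranges to $\omega_1(\tau)\int_\Omega (x_i-\bar{x}_i)^2\,dx \leq \tau|\Omega|$; summing these $d$ inequalities and using $\sum_i (x_i-\bar{x}_i)^2 = |x-\bar{x}|^2$ produces
$$\omega_1(\tau)\int_\Omega |x-\bar{x}|^2\,dx \;\leq\; \tau\,d\,|\Omega|,$$
which is the asserted upper bound. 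Since the linear $u_i$ remain mean-zero independently of the sign of $\tau$, this half of the lemma is valid for all $\tau \in \RR$.

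For $\Omega = \BB$ (translated so $\bar{x} = 0$), a direct polar-coordinate computation gives $\int_\BB |x|^2\,dx = |\mathbb{S}^{d-1}|/(d+2) = d|\BB|/(d+2)$, so the general upper bound collapses to $\omega_1(\tau) \leq \tau(d+2)$. No step presents a serious obstacle: the entire lemma is an application of Rayleigh--Ritz with two natural choices of trial function. The only mildly nonobvious move is summing the $d$ coordinate-direction inequalities rather than optimizing over a single direction or diagonalizing the second-moment tensor, which conveniently delivers the dimension factor $d$ on the right-hand side.
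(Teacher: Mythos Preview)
Your proof is correct and follows the same approach as the paper: the lower bound is obtained by dropping the Hessian term and invoking the variational characterization of $\mu_1$, and the upper bound comes from taking the centered coordinate functions $x_k-\bar{x}_k$ as trial functions, clearing denominators, and summing over $k=1,\dots,d$. The specialization to the unit ball via $\int_\BB |x|^2\,dx = d|\BB|/(d+2)$ is likewise identical.
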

These bounds are illustrated in Figure~\ref{Lemma31bounds}.
\begin{proof}
To establish the upper bound, take the coordinate functions as trial functions: $u_k=x_k-\bar{x}_k$, for $k=1,\dots,d$. Note $\int_\Omega u_k\,dx=0$ by definition of center of mass, so the $u_k$ are valid trial functions. All second derivatives of the $u_k$ are zero, so we have
\[
 \omega_1(\tau) \leq Q[u_k] =\frac{\int_\Omega\tau|Du_k|^2\,dx}{\int_\Omega u_k^2\,dx}
	=\tau\frac{\int_\Omega 1\,dx}{\int_\Omega (x_k-\bar{x}_k)^2\,dx}.
\]
Clearing the denominator and summing over all indices $k$, we obtain
\[
 \omega_1(\tau)\int_\Omega |x-\bar{x}|^2\,dx \leq \tau |\Omega| d,
\]
which is the desired upper bound. When $\Omega$ is the unit ball, note $\int_\Omega |x|^2\,dx=|\Omega|d/(d+2)$.

Now we treat the lower bound. Let $u\in H^2(\Omega)$ with $\int_\Omega u\,dx=0$. Then
\[
 Q[u]\geq\frac{\tau\int_\Omega |Du|^2\, dx}{\int_\Omega u^2\,dx} \geq \tau \mu_1
\]
by the variational characterization of $\mu_1$. Taking the infimum over all trial functions $u$ for the plate yields $ \omega_1(\tau)\geq \tau\mu_1$.
\end{proof}
\begin{figure}[t!]
\begin{center}
\includegraphics{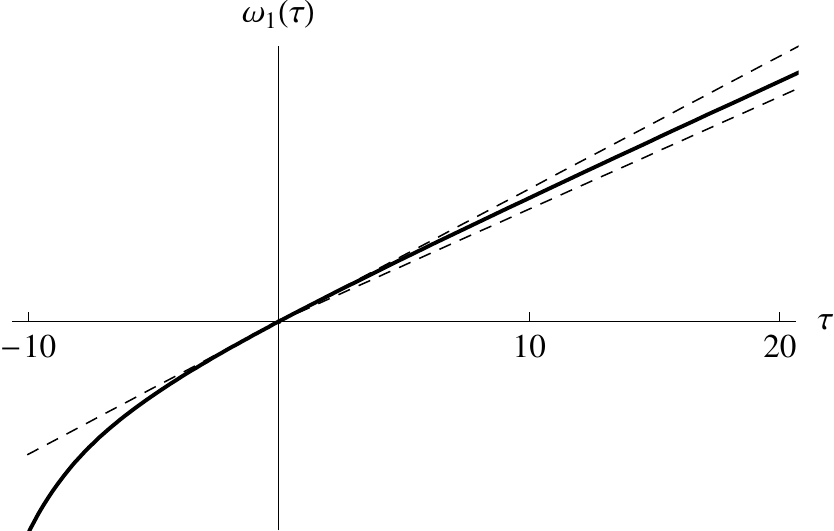}
\caption{The fundamental tone of the disk (solid curve) together with the linear bounds from Lemma~\ref{wbounds}  (dashed lines).}\label{Lemma31bounds}
\end{center}
\end{figure}

Note that Payne \cite{P56} proved linear bounds for eigenvalues of the \emph{clamped} plate under tension. Kawohl, Levine, and Velte \cite{KLV} investigated the sums of the first $d$ eigenvalues as functions of parameters for the clamped plate under tension and compression.

We can also prove another linear upper bound on $\omega_1$, which is just a constant plus the lower bound in Lemma~\ref{wbounds}.
\begin{lemma} \label{wbounds2} For all $\tau\in\RR$,
\[
\omega_1 \leq C(\Omega)+\tau\mu_1,
\]
where the value
\[
 C(\Omega)=\frac{\int_\Omega|D^2v|^2\,dx}{\int_\Omega v^2\,dx}
\]
is given explicitly in terms of the fundamental mode $v$ of the free membrane on $\Omega$.
\end{lemma}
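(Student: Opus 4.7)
The plan is to apply the Rayleigh--Ritz variational characterization of $\omega_1$ directly, using the fundamental mode of the free membrane as the trial function. Recall that $\omega_1$ is the infimum of $Q[u]$ over those $u \in H^2(\Omega)$ with $\int_\Omega u\,dx = 0$, so the whole question is whether the membrane fundamental mode $v$ is an admissible trial function.

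First I would verify admissibility. The eigenfunction $v$ corresponding to $\mu_1$ is $L^2$-orthogonal to the constant eigenfunction associated with $\mu_0 = 0$, so $\int_\Omega v\,dx = 0$. Since $\Omega$ is smoothly bounded, standard elliptic regularity for the Neumann Laplacian gives $v \in C^\infty(\overline{\Omega})$, and in particular $v \in H^2(\Omega)$. So $v$ is a legitimate trial function for the plate Rayleigh quotient.

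Second, I would just insert $v$ into $Q$ and split the numerator linearly in $\tau$:
\[
\omega_1 \;\leq\; Q[v] \;=\; \frac{\int_\Omega |D^2 v|^2\,dx}{\int_\Omega v^2\,dx} \;+\; \tau\,\frac{\int_\Omega |Dv|^2\,dx}{\int_\Omega v^2\,dx}.
\]
The first term is exactly $C(\Omega)$ by definition. For the second term, since $v$ is a Neumann eigenfunction with eigenvalue $\mu_1$, an integration by parts (using $\partial v/\partial n = 0$ on $\partial\Omega$) gives $\int_\Omega |Dv|^2\,dx = \mu_1 \int_\Omega v^2\,dx$, so the second term equals $\tau\mu_1$. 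Combining yields the claimed inequality.

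There is no serious obstacle here; the estimate holds for every real $\tau$ because the inequality $\omega_1 \leq Q[v]$ from the Rayleigh--Ritz minimum principle is valid whatever the sign of the $\tau|Dv|^2$ contribution, provided only that the spectral theory of Section~\ref{spectrum} applies (which it does for all $\tau \in \mathbb{R}$). The only mild point worth flagging is the regularity step: one must know that the free-membrane eigenfunction really does lie in $H^2$, which requires smoothness of $\partial\Omega$ and is the reason the lemma is stated for smoothly bounded $\Omega$.
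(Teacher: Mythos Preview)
Your proof is correct and follows exactly the same approach as the paper: use the membrane fundamental mode $v$ as a trial function in the plate Rayleigh quotient and split the numerator into the Hessian part $C(\Omega)$ and the gradient part $\tau\mu_1$. Your version is in fact more detailed than the paper's, since you explicitly justify that $v\in H^2(\Omega)$ and that $\int_\Omega |Dv|^2\,dx=\mu_1\int_\Omega v^2\,dx$, whereas the paper simply writes $Q[v]=C(\Omega)+\tau Q_{\mathrm{M}}[v]=C(\Omega)+\tau\mu_1$.
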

\begin{proof}
 Let $v$ be a fundamental mode of the membrane with $\Delta v=-\mu_1 v$ and $\int_\Omega v\,dx=0$; the membrane boundary condition is $\partial u/\partial n = 0$ on $\partial\Omega$. Then by the variational characterization of eigenvalues,
\begin{align*}
 \omega_1(\tau)\leq Q[v]&= C(\Omega) +\tau Q_\text{M}[v]=C(\Omega)+\tau\mu_1,
\end{align*}
as desired.
\end{proof}

\begin{figure}[h!]
\begin{center}
\includegraphics{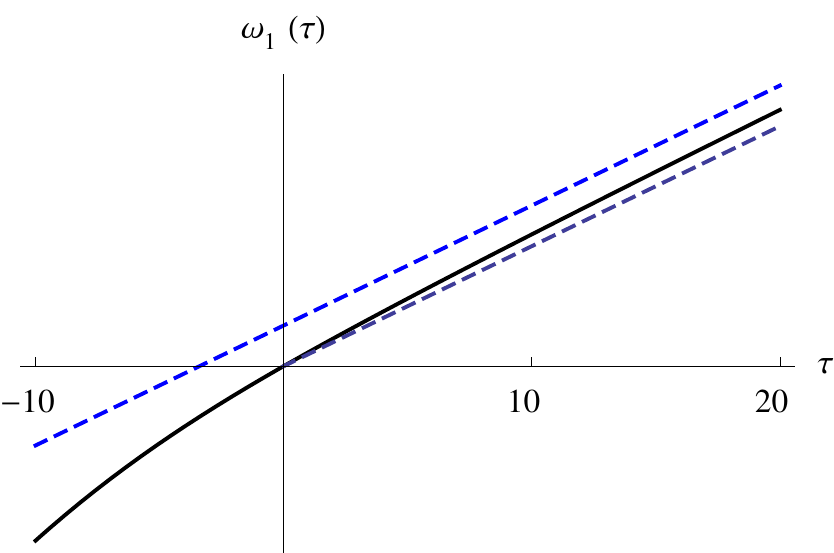}
\caption{The fundamental tone of the disk (solid curve) together with the upper bound of Lemma~\ref{wbounds2} (top dashed line) and the lower bound of Lemma~\ref{wbounds} (bottom dashed line).}
\end{center}
\end{figure}

\subsection*{Infinite tension limit}
A plate behaves like a membrane as the flexural rigidity tends to zero, that is, as $\tau=(\text{tension}/\text{flexural rigidity})$ tends to infinity. For the fundamental tone, that means:
\begin{cor} For the fundamental tone of the free plate,
\[
\frac{\omega_1(\tau)}{\tau}\to\mu_1\qquad\text{as $\tau\to\infty$.}
\]
\end{cor}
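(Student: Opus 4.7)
The plan is to obtain the corollary as an immediate squeeze, sandwiching $\omega_1(\tau)/\tau$ between two linear bounds already established in Lemmas~\ref{wbounds} and~\ref{wbounds2}.

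First I would recall the lower bound from Lemma~\ref{wbounds}: for $\tau \geq 0$,
\[
\tau \mu_1 \leq \omega_1(\tau),
\]
so dividing by $\tau > 0$ yields $\omega_1(\tau)/\tau \geq \mu_1$ for all positive $\tau$. This gives the lower half of the squeeze and holds for every $\tau > 0$ with no error term.

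Next I would invoke Lemma~\ref{wbounds2}, which states
\[
\omega_1(\tau) \leq C(\Omega) + \tau \mu_1
\]
with $C(\Omega)$ depending only on the domain (specifically, it is the Hessian term in the Rayleigh quotient evaluated at the free membrane fundamental mode). Dividing again by $\tau > 0$ gives
\[
\frac{\omega_1(\tau)}{\tau} \leq \mu_1 + \frac{C(\Omega)}{\tau}.
\]
As $\tau \to \infty$, the error $C(\Omega)/\tau \to 0$, and combining with the lower bound I conclude $\omega_1(\tau)/\tau \to \mu_1$ by the squeeze theorem.

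There is no real obstacle here: the corollary is essentially the content of the two preceding lemmas written together and passed to the limit. The only thing worth emphasizing in the write-up is that the upper bound from Lemma~\ref{wbounds2} uses the free \emph{membrane} fundamental mode as a trial function in the plate Rayleigh quotient, which is exactly the mechanism by which the plate ``sees'' the membrane as $\tau \to \infty$ and which justifies heuristically why $\mu_1$ should appear in the limit.
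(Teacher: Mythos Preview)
Your proposal is correct and is essentially identical to the paper's own proof: the paper also combines the lower bound of Lemma~\ref{wbounds} with the upper bound of Lemma~\ref{wbounds2} to sandwich $\omega_1(\tau)/\tau$ between $\mu_1$ and $\mu_1 + C(\Omega)/\tau$, and then lets $\tau\to\infty$. You have simply written out the squeeze in a bit more detail than the paper does.
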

\begin{proof}
By Lemmas~\ref{wbounds} and~\ref{wbounds2}, we have
\[
\mu_1\leq \frac{\omega_1(\tau)}{\tau}\leq\mu_1+\frac{C}{\tau}.
\]
Let $\tau\to\infty$.
\end{proof}

The eigenfunctions should converge as $\tau\to\infty$ to the eigenfunctions of the free membrane problem. Proving this for all eigenfunctions seems to require a singular perturbation approach, which has been carried out for the clamped plate in \cite{deGroen}, but we will not need any such facts for our work. For the convergence of the fundamental tone of the clamped plate to the first fixed membrane eigenvalue, see \cite{KLV}.

\begin{rmk} 
In the limit as $\tau\to 0$, we find a relationship between $\omega_1(\tau)/\tau$ and the scalar moment of inertia of the region $\Omega$; see \cite{cthesis}.
\end{rmk}
\section{\bf Summary of Bessel Function facts}
The radial part of the fundamental tone of the unit ball is a linear combination of Bessel functions. Before we begin constructing our trial functions, we need to gather some results established in the companion paper \cite{chasman}.

The ultraspherical Bessel functions $j_l(z)$ of the first kind are defined in terms of the Bessel functions of the first kind, $J_\nu(z)$, as follows:
\begin{align*}
j_{l}(z) &= z^{-s}J_{s+l}(z)\\
\text{with}\quad s &= \frac{d-2}{2}.
\end{align*}
and solve the ultraspherical Bessel equation
\begin{equation}
z^2w''+(d-1)zw'+\Big(z^2-l(l+d-2)\Big)w = 0.\label{besseleqn}.
\end{equation}
Note that this notation suppresses the dependence of the $j_l$ functions on the dimension $d$; we assume dimension $d\geq 2$ is fixed. Ultraspherical modified Bessel functions $i_{l}(z)$ of the first kind are defined analogously, with
\[
i_{l}(z) = z^{-s}I_{s+l}(z)
\]
solving the modified ultraspherical Bessel equation
\begin{equation}
z^2w''+(d-1)zw'-\Big(z^2+l(l+d-2)\Big)w = 0. \label{modbesseleqn}.
\end{equation}

Ultraspherical Bessel functions satisfy the following recurrence relations:
\begin{align}
\frac{d-2+2l}{z}j_l(z) &= j_{l-1}(z)+j_{l+1}(z)\label{j1}\\
j_l^\prime(z) &= \frac{l}{z}j_l(z)-j_{l+1}(z)\label{j2}\\
\frac{d-2+2l}{z}i_l(z) &= i_{l-1}(z)-i_{l+1}(z)\label{i1}\\
i_l^\prime(z) &= \frac{l}{z}i_l(z)+i_{l+1}(z)\label{i2}
\end{align}

Ultraspherical Bessel functions and their derivatives may be expressed by converging series. The first few terms in these expansions may be used to bound the Bessel functions and their derivatives; we will need such bounds for the second derivatives of $j_1$ and $i_1$. Let $d_k$ denote the coefficients of the series expansion for $i_1^{\prime\prime}(z)$, so that
\[
j_1^{\prime\prime}(z) = \sum_{k=1}^\infty (-1)^k d_k z^{2k-1} \quad \text{and} \quad i_1^{\prime\prime}(z) = \sum_{k=1}^\infty d_k z^{2k-1}
\]
by the series expansions of $j_l$ and $i_l$ in \cite[p. 5]{chasman}, where
\begin{align*}
d_k&=\frac{2k+1}{(k-1)!\Gamma(k+1+d/2)}2^{1-2k-d/2}.
\end{align*}

\begin{lemma}\label{ijbounds}\cite[Lemma 10]{chasman} We have the following bounds:
 \begin{align*}
 -d_1 z+d_2 z^3&\geq j_1^{\prime\prime}(z)&\text{for all $z\in\Big[0,\sqrt{3(d+2)/(d+5)}\Big]$,}\\
d_1 z+\frac{6}{5}d_2 z^3&\geq i_1^{\prime\prime}(z)&\text{for all $z\in\Big[0,\sqrt{3}\Big]$.}
 \end{align*}
\end{lemma}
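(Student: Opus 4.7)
The plan is to subtract each proposed upper bound from the corresponding series for $j_1^{\prime\prime}$ or $i_1^{\prime\prime}$ and show that the remaining tail has the correct sign throughout the stated interval. In both cases the tail begins at the $z^5$ term, so the argument reduces to controlling this tail.

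For the first inequality I would write
\[
j_1^{\prime\prime}(z) - \bigl(-d_1 z + d_2 z^3\bigr) = \sum_{k=3}^{\infty} (-1)^k d_k z^{2k-1} = -z^5\bigl(d_3 - d_4 z^2 + d_5 z^4 - \cdots\bigr)
\]
and invoke the Leibniz criterion: the alternating series in parentheses is non-negative as long as the sequence $\{d_{k+3} z^{2k}\}_{k\geq 0}$ is non-increasing, that is, $z^2 \leq d_k/d_{k+1}$ for every $k \geq 3$. From the explicit formula for $d_k$ one computes
\[
\frac{d_k}{d_{k+1}} = \frac{4k(2k+1)(k+1+d/2)}{2k+3},
\]
which is increasing in $k$, so the binding case is $k=3$ and the required condition becomes $z^2 \leq d_3/d_4 = 14(d+8)/3$. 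Since $3(d+2)/(d+5) \leq 3 \leq 14(d+8)/3$ for every $d \geq 2$, this covers the stated interval.

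For the second inequality every term of the series for $i_1^{\prime\prime}$ is positive, and the proposed bound provides a slack of $\tfrac{1}{5} d_2 z^3$ which must absorb the tail:
\[
\bigl(d_1 z + \tfrac{6}{5} d_2 z^3\bigr) - i_1^{\prime\prime}(z) = \tfrac{1}{5} d_2 z^3 - \sum_{k=3}^{\infty} d_k z^{2k-1}.
\]
Using the same ratio formula, $d_{k+1}/d_k$ is decreasing in $k$, so for $k \geq 3$ and $z^2 \leq 3$ the ratio of consecutive tail terms satisfies $(d_{k+1}/d_k) z^2 \leq 3\, d_4/d_3$. Summing the dominating geometric series,
\[
\sum_{k=3}^{\infty} d_k z^{2k-1} \leq \frac{d_3 z^5}{1 - 3\,d_4/d_3},
\]
and comparing with $\tfrac{1}{5} d_2 z^3$ reduces the claim to a single numerical inequality involving the ratios $d_2/d_3 = 20(d+6)/7$ and $d_3/d_4 = 14(d+8)/3$, which holds comfortably for all $d \geq 2$.

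The main obstacle is really just the bookkeeping of these ratios. The coefficients $d_k$ decay very rapidly (each $d_{k+1}/d_k$ is of order $1/k^3$), so both inequalities carry substantial slack; the challenge is arranging the geometric and alternating series estimates cleanly rather than finding a subtle argument.
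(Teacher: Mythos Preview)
Your argument is correct. The paper itself does not prove this lemma here; it is quoted from the companion paper \cite{chasman} with the remark that the bounds can also be verified numerically in any fixed dimension. So there is no in-paper proof to compare against directly, but your approach---removing the polynomial bound and controlling the remaining tail of the power series---is the natural one and almost certainly what the companion paper does as well.

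A few small comments. Your computation of the ratio $d_k/d_{k+1} = 4k(2k+1)(k+1+d/2)/(2k+3)$ is right, and the monotonicity in $k$ is clear since both $4k(2k+1)/(2k+3)$ and $k+1+d/2$ increase. For the second bound, the final numerical check is that
\[
\frac{15\,d_3}{d_2} \leq 1 - \frac{3\,d_4}{d_3},\qquad\text{i.e.}\qquad \frac{21}{4(d+6)} \leq 1 - \frac{9}{14(d+8)},
\]
which for $d\geq 2$ reads $21/32 \leq 131/140$ at worst; it would strengthen the write-up to display this explicitly rather than leave it as ``holds comfortably.'' Also, your closing remark that $d_{k+1}/d_k$ is of order $1/k^3$ is a slight overstatement (it is $\sim 1/(4k^2)$), though this has no bearing on the argument.
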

While proofs are provided in the companion paper, these bounds and those listed below in Lemma~\ref{besselsigns} can also all be demonstrated numerically in Mathematica or Maple for any given dimension.

We will also be using additional facts about the signs of certain Bessel functions and derivatives. These were proven in \cite{chasman} and are collected below. We write $\ainf$ for the first nontrivial zero of $j_l^\prime$.

\begin{lemma}\label{besselsigns}\cite[Lemmas 5 through 9]{chasman} We have the following:
 \begin{enumerate}
  \item For $l=1,\dots,5$, we have $j_l>0$ on $(0,\ainf]$.
  \item We have $j_1^\prime>0$ on $(0,\ainf)$.
  \item We have $j_2^\prime>0$ on $(0,\ainf]$.
  \item We have $j_1^{\prime\prime}<0$ on $(0,\ainf]$.
  \item We have $j_l^{(4)}>0$ on $(0,\ainf]$.
 \end{enumerate}
\end{lemma}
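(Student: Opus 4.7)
The overall plan is to deduce each clause of Lemma~\ref{besselsigns} from four ingredients: (a) the convergent power series $j_l(z)=c_l z^l(1+O(z^2))$ with $c_l>0$, which pins down signs in a right neighborhood of the origin; (b) the recurrence relations \eqref{j1}--\eqref{j2}, which trade derivatives for shifts in the order $l$; (c) the ultraspherical Bessel equation \eqref{besseleqn}, which expresses $j_l''$ as an explicit linear combination of $j_l$ and $j_l'/z$; and (d) the classical monotonicity $j_{\nu,1}<j_{\nu+1,1}$ of the first positive zero of $J_\nu$ in its order $\nu$.

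Parts (1) and (2) fall out almost immediately. The series gives $j_1'(0^+)=c_1>0$, and since $\ainf$ is by definition the first positive zero of $j_1'$, continuity forces $j_1'>0$ on $(0,\ainf)$. Integrating from $0$ yields $j_1>0$ on $(0,\ainf]$. For the remaining $l=2,\ldots,5$, Rolle's theorem applied to $j_1$ on $[0,j_{s+1,1}]$ gives $\ainf<j_{s+1,1}$, and the interlacing fact $j_{s+1,1}<j_{s+l,1}$ then places $\ainf$ strictly inside the first positivity interval of every $j_l$ with $l\le 5$. For (3) I would differentiate via the recurrence $j_2'(z)=(2/z)j_2(z)-j_3(z)$, read positivity near the origin off the series, and propagate it by arguing that the first critical point of $j_2$ lies past $\ainf$ (again by zero-interlacing, now applied to $J_{s+2}$ and $J_{s+2}'$).

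The two substantive clauses are (4) and (5). Solving \eqref{besseleqn} for the second derivative of $j_1$ gives
\[
j_1''(z)=-\frac{d-1}{z}\,j_1'(z)\;-\;\frac{z^2-(d-1)}{z^2}\,j_1(z),
\]
which is manifestly negative on the outer range $z^2\ge d-1$, since both terms on the right are then non-positive by (1) and (2). On the complementary inner range $0<z^2<d-1$ the second term flips sign, and here I would invoke the alternating series bound of Lemma~\ref{ijbounds}, verifying that the two-term truncation $-d_1 z+d_2 z^3$ remains negative throughout. Clause (5) is attacked analogously: differentiating \eqref{besseleqn} twice expresses $j_l^{(4)}$ as an explicit rational-in-$z$ combination of $j_l, j_l', j_l''$, whose signs are now all in hand.

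The hardest step I expect is bridging the outer and inner regimes for (4) and (5): the series bound of Lemma~\ref{ijbounds} only controls $j_1''$ on $[0,\sqrt{3(d+2)/(d+5)}]$, while the ODE argument works cleanly only on $[\sqrt{d-1},\ainf]$, and these two intervals need not cover $(0,\ainf]$ in every dimension. Closing the gap will require either a sharper truncation of the alternating series keeping one more term, or a monotonicity argument for $j_1''$ derived by differentiating the Bessel equation once more and recycling the now-known signs of $j_1$ and $j_1'$. That matching is where I expect the bulk of the technical effort to sit; everything else reduces to bookkeeping with signs and the standard interlacing theorems for Bessel functions.
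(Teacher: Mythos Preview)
The paper does not prove this lemma at all: it is imported wholesale from the companion paper \cite{chasman} (Lemmas 5 through 9 there), with the remark that the inequalities can also be checked numerically in any fixed dimension. There is therefore no in-paper argument to compare your outline against.

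On the merits: your treatment of (1) and (2) is correct, and the interlacing chain $\ainf<j_{s+1,1}<j_{s+l,1}$ does give $j_l>0$ on $(0,\ainf]$ for every $l\ge 1$. For (3) your parenthetical ``by zero-interlacing, now applied to $J_{s+2}$ and $J_{s+2}'$'' hides the real issue: since $j_l(z)=z^{-s}J_{s+l}(z)$, one has $j_l'(z)=z^{-s}J_{s+l}'(z)-sz^{-s-1}J_{s+l}(z)$, so the zeros of $j_l'$ are \emph{not} the zeros of $J_{s+l}'$, and ordinary Bessel-derivative interlacing does not transfer directly. For (4) you have already diagnosed the gap accurately (indeed $3(d+2)/(d+5)<d-1$ once $d\ge 3$, so the series range and the ODE range fail to overlap). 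For (5), however, your sketch is too optimistic. Differentiating \eqref{besseleqn} twice and eliminating $j_1'''$ gives
\[
z^2 j_1^{(4)} \;=\; \bigl((d+1)(d+2)-z^2\bigr)\, j_1'' \;+\; (d-1)\,z\, j_1' \;+\; 2(d+2)\, j_1,
\]
and since $\ainf^2<d+2<(d+1)(d+2)$ in every dimension, the $j_1''$ term is non-positive throughout $(0,\ainf]$ while the other two are non-negative; the signs of $j_1,j_1',j_1''$ alone do not force $j_1^{(4)}>0$. Substituting $j_1''$ from the ODE to reduce to $j_1,j_1'$ only does not help either, as the resulting coefficient of $j_1'$ is negative on all of $(0,\ainf]$. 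A genuinely different idea is needed for (5).
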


We are now ready to begin proving Theorem~\ref{thm1}.

\section{\bf Trial functions}\label{mainthm}
In this and the next sections, we establish the lemmas which allow us to prove Theorem~\ref{thm1}:
\emph{Among all regions $\Omega$ of a fixed volume, when $\tau>0$ the fundamental tone of the free plate is maximal for a ball. That is,}
\begin{equation}
\omega_1(\Omega) \leq \omega_1(\Ostar), \qquad\text{with equality if and only if $\Omega$ is a ball.} \label{c2ineq}
\end{equation}

For simplicity, as we prepare to prove Theorem~\ref{thm1}, we will write $\omega$ instead of $\omega_1$ for the fundamental tone of the free plate with shape $\Omega$; the fundamental tone of the unit ball will be denoted by $\ostar$. When dependence on the region $\Omega$ and the tension $\tau$ need be made explicit, we write $\omega(\tau,\Omega)$ for the fundamental tone and $Q_{\tau,\Omega}$ for the Rayleigh quotient. The tension parameter $\tau>0$ throughout the remainder of this paper.

We begin with the assumption that our domain $\Omega$ has the same volume as the unit ball $\BB$; this is justified by the scaling argument in Lemma~\ref{scaling}.

In \cite[Theorem 2]{chasman}, we identified the fundamental mode of the unit ball, written in spherical coordinates as:
\begin{equation}
 u_1=R(r)Y_1:=\Big(j_1(ar)+\gamma i_1(br)\Big)Y_1,
\end{equation}
where $Y_1$ is any of the $d$ spherical harmonics of order 1, $j_1(z)$ and $i_1(z)$ are ultraspherical Bessel functions, and $a$ and $b$ are positive constants satisfying the conditions $a^2b^2=\omega$ and $b^2-a^2=\tau$. Note that we may take our spherical harmonics $Y_1$ to be the $x_i/r$ for $i=1\dots,d$, where $x_i$ the $i$th coordinate function. 

Finally, recall that we took $\ainf$ to be the first nontrivial zero of $j_l^\prime(z)$; by the proof of \cite[Theorem 3]{chasman}, we have $a<\ainf$. Note that the fundamental tone of the free membrane with shape $\BB$ is given by $\mu_1^*=\ainfsq$.

We are now able to choose our trial functions. Inspired by Weinberger's proof for the membrane \cite{W56}, we choose appropriate trial functions from the fundamental modes of the unit ball. In the following lemmas, we take
\[
R(r)=j_1(ar)+\gamma i_1(br)
\]
to be the radial part of the fundamental mode of the unit ball. Recall $a$ and $b$ are positive constants determined by $\tau$ and the boundary conditions, as in the proof of Theorem~3 in \cite{chasman}. The constant $\gamma$ is positive and determined by $a$, $\tau$, and the boundary conditions as folllows:
\begin{equation}
\gamma :=\frac{-a^2j_1^{\prime\prime}(a)}{b^2i_1^{\prime\prime}(b)}>0.\label{gammadef}
\end{equation}
Recall also that $R(r)>0$ on $(0,1]$ and $R'(1)>0$.

\begin{lemma}\label{trialfcn} (Trial functions) Let the radial function $\rho$ be given by the function $R$, extended linearly. That is,
\[
 \rho(r)=\begin{cases} R(r) &\text{when $0\leq r\leq1$,}\\
       R(1)+(r-1)\Rp(1)&\text{when $r\geq1$.}\\
      \end{cases}
\]
After translating $\Omega$ suitably, the functions $u_k = x_k\rho(r)/r$, for $k=1,\dots,d$, are valid trial functions for the fundamental tone.
\end{lemma}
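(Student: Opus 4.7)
The statement requires two things of the functions $u_k = x_k \rho(r)/r$: that they lie in $H^2(\Omega)$, and that after translating $\Omega$ appropriately we have $\int_\Omega u_k\,dx = 0$ for each $k=1,\dots,d$ (so that the $u_k$ are admissible in the variational characterization of $\omega_1$). The $H^2$ regularity is essentially by inspection. Setting $g(r) = \rho(r)/r$, the singularity at $r=0$ is removable because $j_1$ and $i_1$ each vanish linearly at the origin, and $\rho \in C^1([0,\infty))$ by construction since $R(1)$ and $R'(1)$ are matched at the gluing point. Thus $g \in C^1([0,\infty))$, smooth away from $r=1$, with second derivative bounded and jumping only across the sphere $r=1$. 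Hence $u_k \in W^{2,\infty}_{\mathrm{loc}}(\RR^d) \subset H^2(\Omega)$ for any bounded $\Omega$.

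The substantive part of the proof is the translation, accomplished by the classical Weinberger trick via Brouwer's fixed-point theorem. Define the continuous vector field $F\colon\RR^d\to\RR^d$ by
\[
F(y) = \int_\Omega (x-y)\,g(|x-y|)\,dx,
\]
so that a zero of $F$ is precisely a translation vector $y$ after which the condition $\int_{\Omega-y} x_k g(|x|)\,dx = 0$ holds for all $k$. The plan is to show $F$ must vanish somewhere, using the large-$|y|$ asymptotics of the integrand. For $r \geq 1$ we have $g(r) = R'(1) + (R(1)-R'(1))/r$, so $g$ is bounded on $[0,\infty)$ and $g(r)\to R'(1) > 0$ as $r\to\infty$ (using the already-noted positivity of $R'(1)$). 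Since $\Omega$ is bounded, $|x-y|\to\infty$ uniformly in $x\in\Omega$ as $|y|\to\infty$, whence a direct estimate gives
\[
F(y) = -R'(1)|\Omega|\,y + \BigO(1) \qquad\text{as } |y|\to\infty.
\]

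With this asymptotic in hand, one concludes by a standard fixed-point argument: for $c = 1/\bigl(R'(1)|\Omega|\bigr)$, the continuous map $y \mapsto y - c\,F(y)$ equals $\BigO(1)$ at infinity, hence sends some sufficiently large closed ball in $\RR^d$ into itself. Brouwer's theorem produces a fixed point, which is exactly a zero of $F$, and translating $\Omega$ by this vector yields the required $\int_\Omega u_k\,dx = 0$. The main obstacle, and the place where the linear extension in the definition of $\rho$ is essential, is establishing that $F(y)$ grows like $-R'(1)|\Omega|\,y$ rather than collapsing to $0$ or blowing up superlinearly; without the positive asymptotic slope $R'(1)$, the trial functions would not admit a zero-mean translate, and the whole scheme would fail.
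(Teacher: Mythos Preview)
Your argument works, modulo a sign slip: with $c = 1/(R'(1)|\Omega|)$ and $F(y) = -R'(1)|\Omega|\,y + \BigO(1)$ one gets $y - cF(y) = 2y + \BigO(1)$, not $\BigO(1)$; take $y \mapsto y + cF(y)$ instead and the Brouwer step goes through. The paper's route to the translation is different and a bit shorter. Rather than passing to infinity, it applies Brouwer on the convex hull of $\Omega$: for $v$ on its boundary there is a supporting hyperplane with $\Omega$ entirely on one side, and since $\rho \geq 0$ each integrand $(x-v)\,\rho(|x-v|)/|x-v|$ points into that half-space, so the vector field is inward-pointing on the boundary of the hull. This uses only nonnegativity of $\rho$, not its asymptotic slope. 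Consequently your final remark overstates the role of the linear extension in \emph{this} lemma: any nonnegative extension would yield a zero-mean translate by the paper's argument, and the specific linear choice is what matters later, in the concavity and monotonicity lemmas. Your large-ball approach is perfectly valid but trades a simple geometric observation for an asymptotic computation and the extra input $R'(1) > 0$.
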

\begin{proof}
To be valid trial functions, the $u_k$ must be in $H^2(\Omega)$. Because $\Omega$ is bounded in $\RR^d$, the only possible issue would be a singularity at the origin. The series expansions given in \cite[p. 5]{chasman} for $j_1$ and $i_1$ give us that $R(r)/r$ approaches a constant as $r\to 0$. Thus, $u_k\in H^2(\Omega)$ as desired. The trial functions must also be perpendicular to the constant function, and so we will need
\[
\int_\Omega \frac{\rho(r)x_k}{r}\,dx=0\qquad\text{for $k=1,\dots,d$.}
\]
We use the Brouwer Fixed Point Theorem to translate our region so that the above conditions are guaranteed; here again we follow Weinberger \cite{W56}. Write $x=(x_1,\dots,x_d)$ and consider the vector field
\[
 X(v)=\int_\Omega\frac{\rho(|x-v|)}{|x-v|}(x-v)\,dx.
\]
The vector field $X$ is continuous by construction. For any vector $v$ along the boundary of the convex hull of $\Omega$, the vector field $X(v)$ is inward-pointing, because $\rho\geq0$ and the entire region $\Omega$ lies in a half-space to one side of $v$. Thus by the Brouwer Fixed Point Theorem, our vector field $X$ vanishes at some $v$ in the convex hull of $\Omega$. If we first translate $\Omega$ by $v$, then we have $X(0)=\int_\Omega \rho(r)x/r\, dx=0$. This gives us $\int_\Omega u_k\,dx=0$, as desired.
\end{proof}
We will need one further fact about our radial function $\rho$.
\begin{lemma}\label{gppneg} (Concavity)
The function $\rpp(r)\leq0$ for $r\in[0,1]$, with equality only at the endpoints.
\end{lemma}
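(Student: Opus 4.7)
The plan is to reduce the claim to strict convexity of $R''$ on $[0,1]$. If I can show $R''$ is strictly convex with $R''(0)=R''(1)=0$, then $R''$ lies strictly below the chord joining $(0,0)$ and $(1,0)$, which forces $R''(r) < 0$ on $(0,1)$ with equality only at the endpoints. Since $\rho = R$ on $[0,1]$, this is exactly the statement of the lemma.

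First I would dispatch the boundary values. The ultraspherical Bessel series for $j_1(z)$ and $i_1(z)$ contain only odd powers of $z$ starting at $z^1$, so $j_1''(0) = i_1''(0) = 0$ and hence $R''(0) = 0$. At $r = 1$, the choice $\gamma = -a^2 j_1''(a)/(b^2 i_1''(b))$ in \eqref{gammadef} was engineered precisely so that $R''(1) = a^2 j_1''(a) + \gamma b^2 i_1''(b) = 0$.

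For the convexity step, differentiating $R(r) = j_1(ar) + \gamma i_1(br)$ twice more gives
\[
R^{(4)}(r) = a^4 j_1^{(4)}(ar) + \gamma b^4 i_1^{(4)}(br).
\]
The first summand is positive on $(0,1]$ by Lemma~\ref{besselsigns}(5), since $a < \ainf$ yields $ar \in (0,\ainf]$. For the second summand, the series $i_1(z) = \sum_{k \geq 0} c_k z^{2k+1}$ has every $c_k > 0$, so
\[
i_1^{(4)}(z) = \sum_{k \geq 2} c_k (2k+1)(2k)(2k-1)(2k-2)\, z^{2k-3} > 0 \qquad\text{for } z > 0,
\]
the $k=0,1$ terms vanishing because one of the four factors in each product is zero. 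Since $\gamma > 0$, we obtain $R^{(4)} > 0$ on $(0,1]$, and hence $R''$ is strictly convex on $[0,1]$.

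There is not really a substantive obstacle here. The main conceptual move is recognizing that passing to $R^{(4)}$ converts the awkward mixed-sign sum $a^2 j_1''(ar) + \gamma b^2 i_1''(br)$ into a manifestly positive sum, after which the vanishing of $R''$ at $0$ and $1$ does the rest. Every needed Bessel-function sign fact is already assembled in Lemma~\ref{besselsigns} or falls straight out of the power series for $i_1$.
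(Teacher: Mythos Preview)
Your proof is correct and follows essentially the same route as the paper: establish $R''(0)=R''(1)=0$, then show $R^{(4)}>0$ on $(0,1]$ via Lemma~\ref{besselsigns}(5) for the $j_1$ term and positivity of the $i_1$ series for the other, so that strict convexity of $R''$ forces it below zero on the interior. The only cosmetic differences are that the paper invokes the boundary condition $Mu=0$ for $R''(1)=0$ (equivalent to your use of the defining relation for $\gamma$) and asserts positivity of all $i_1$ derivatives rather than writing out the fourth-derivative series explicitly.
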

\begin{proof}
First note that on $[0,1]$, the function $\rho\equiv R$. We see
\[
R^{\prime\prime}(r) = a^2j_1^{\prime\prime}(ar)+\gamma b^2i_1^{\prime\prime}(br),
\]
which is zero at $r=0$ because the individual Bessel derivatives vanish there, by the series expansions for the Bessel $j_l(z)$ and $i_l(z)$ in \cite{chasman}. At $r=1$, the function $R^{\prime\prime}$ vanishes because of the boundary condition $Mu = 0$.

The fourth derivative of $R$ is given by
\[
R^{\prime\prime\prime\prime}(r) = a^4j_1^{\prime\prime\prime\prime}(ar)+\gamma b^4i_1^{\prime\prime\prime\prime}(br).
\]

Because all derivatives of $i_1(z)$ are positive when $z\geq 0$, the second term above is positive on $(0,\infty)$. Lemma~9 in \cite{chasman} states that that $j_1^{\prime\prime\prime\prime}(z)$ is positive on $(0,\ainf]$. Thus $R^{\prime\prime\prime\prime}(r)>0$ on $(0,1]$, and so $R^{\prime\prime}(r)$ is a strictly convex function on $[0,1]$. Since $R^{\prime\prime}=0$ at $r=0$ and $r=1$, the function $R^{\prime\prime}$ must be negative on the interior of the interval $[0,1]$.
\end{proof}

We now bound our fundamental tone above by a quotient of integrals whose integrands are radial functions. The numerator will be quite complicated, so we write
\[
 N[\rho]:=(\rpp)^2+\frac{3(d-1)}{r^4}(\rho-r\rp)^2+\tau(\rp)^2+\frac{\tau(d-1)}{r^2}\rho^2.
\]

We will also need the following calculus facts:
\begin{fact}\cite[Appendix]{cthesis}\label{derivs} We have the sums
\begin{align*}
\sum_{k=1}^d|u_k|^2&=\rho^2\\
\sum_{k=1}^d|Du_k|^2&=\frac{d-1}{r^2}\rho^2+(\rp)^2\\
\sum_{k=1}^d|D^2u_k|^2&= (\rpp)^2+\frac{3(d-1)}{r^4}(\rho-r\rp)^2.
\end{align*}
\end{fact}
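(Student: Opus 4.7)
The plan is to verify each identity by a direct computation, using rotational symmetry as the one non-routine ingredient to simplify the Hessian sum. Throughout I would set $h(r) := \rho(r)/r$ so that $u_k(x) = x_k h(r)$; this eliminates the nuisance $1/r$ factors when differentiating. The first identity is immediate from $\sum_k x_k^2 = r^2$.

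For the gradient, I would differentiate to obtain $\partial_i u_k = \delta_{ik} h + (x_i x_k / r) h'$, square, and sum over $i,k$. Using $\sum_i x_i^2 = r^2$ twice, the double sum collapses to $d h^2 + 2 r h h' + r^2 (h')^2 = (d-1) h^2 + (h + rh')^2$. Since $rh = \rho$ gives $h + rh' = (rh)' = \rho'$, this equals $(d-1)\rho^2/r^2 + (\rho')^2$, as claimed.

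For the Hessian identity, the key observation I would exploit is that $S(x) := \sum_k |D^2 u_k(x)|^2$ is actually a radial function. Under any orthogonal $T$ the components transform linearly as $u_k \circ T = \sum_l T_{kl} u_l$, so the $3$-tensor $\partial_i \partial_j u_k$ transforms by the orthogonal action of $T$ on all three indices, and its squared Hilbert--Schmidt norm is invariant; combined with rotation invariance of $r = |x|$, this forces $S = S(r)$. I may therefore evaluate at the convenient point $x_0 = (r, 0, \ldots, 0)$. Starting from the general formula
\[
\partial_i \partial_j u_k = \frac{h'}{r}\bigl(\delta_{ik} x_j + \delta_{ij} x_k + \delta_{jk} x_i\bigr) + \frac{rh'' - h'}{r^3}\, x_i x_j x_k,
\]
and writing $Q := (r\rho' - \rho)/r^2 = h'$, a short case check shows that at $x_0$ the Hessian of $u_1$ is diagonal with entries $(\rho'', Q, Q, \ldots, Q)$ — the top-left entry arising from $2h' + rh'' = (rh)'' = \rho''$ — while for each $k \geq 2$ the Hessian of $u_k$ has exactly two nonzero entries $\partial_1 \partial_k u_k = \partial_k \partial_1 u_k = Q$. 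Adding the squared Hilbert--Schmidt norms then yields
\[
S(r) = (\rho'')^2 + (d-1) Q^2 + (d-1)\cdot 2 Q^2 = (\rho'')^2 + \frac{3(d-1)}{r^4}(\rho - r\rho')^2,
\]
which is the third identity.

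The only real obstacle is the bookkeeping of the third sum: without the rotation trick one has to evaluate tensor contractions such as $\sum_{i,j,k}(\delta_{ik} x_j + \delta_{ij} x_k + \delta_{jk} x_i)^2 = 3(d+2) r^2$ and $\sum_{i,j,k} \delta_{ik} x_i x_j^2 x_k = r^4$, which are elementary but tedious; the rotational reduction to $x_0$ is the clean way to organize the argument.
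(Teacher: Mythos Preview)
Your argument is correct. The paper itself does not prove this fact; it simply records the identities and cites the Appendix of the thesis \cite{cthesis}, so there is no in-paper proof to compare against directly. The thesis computation is a straightforward expansion of the triple sum $\sum_{i,j,k}(\partial_i\partial_j u_k)^2$ using the contraction identities you mention at the end.

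Your rotational-invariance reduction is a genuinely cleaner organization of the Hessian calculation than the brute-force contraction. The observation that the $3$-tensor $\partial_i\partial_j u_k$ transforms orthogonally in all three indices, so that its Hilbert--Schmidt norm is a radial function, lets you evaluate at a single axis point and read off the answer from the diagonal/off-diagonal structure you describe. This avoids exactly the tedious sums $\sum_{i,j,k}(\delta_{ik}x_j+\delta_{ij}x_k+\delta_{jk}x_i)^2=3(d+2)r^2$ etc.\ that the direct method requires. Both routes are elementary, but yours scales better if one ever wanted higher-order analogues. One small point: you might state explicitly that $h'=Q=(r\rho'-\rho)/r^2$ equals $-(\rho-r\rho')/r^2$, so that $Q^2=(\rho-r\rho')^2/r^4$ matches the sign convention in the statement; you do this implicitly, but making it visible avoids a reader's momentary confusion.
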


We may now use the trial functions to bound our fundamental tone by a quotient of integra;s.
\begin{lemma}\label{lemmaboundRC} (Using the trial functions)
 For any $\Omega$, translated as in Lemma~\ref{trialfcn}, we have
\begin{equation}
\omega \leq \frac{\int_\Omega N[\rho]\,dx}{\int_\Omega \rho^2\,dx}\label{boundRC}
\end{equation}
with equality if $\Omega=\Ostar$.
\end{lemma}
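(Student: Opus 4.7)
The plan is to apply the Rayleigh--Ritz variational characterization to each of the $d$ trial functions $u_k = x_k\rho(r)/r$ supplied by Lemma~\ref{trialfcn}, then combine the resulting $d$ inequalities into a single radial estimate using the elementary observation that a minimum is bounded by any weighted average. Concretely, since each $u_k$ is admissible, $\omega \leq Q[u_k]$ for $k=1,\dots,d$, and if we write $Q[u_k] = A_k/B_k$ with $A_k,B_k>0$, then
\[
\omega \;\leq\; \min_k \frac{A_k}{B_k} \;\leq\; \frac{\sum_{k=1}^d A_k}{\sum_{k=1}^d B_k} \;=\; \frac{\sum_k \int_\Omega\bigl(|D^2u_k|^2 + \tau|Du_k|^2\bigr)\,dx}{\sum_k \int_\Omega u_k^2\,dx}.
\]

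Next I would invoke Fact~\ref{derivs} to evaluate the three pointwise sums appearing in numerator and denominator. These express $\sum_k u_k^2$, $\sum_k |Du_k|^2$, and $\sum_k |D^2u_k|^2$ as explicit radial functions of $\rho$, $\rp$, and $\rpp$. Substituting, the numerator becomes
\[
\int_\Omega \left[(\rpp)^2 + \frac{3(d-1)}{r^4}(\rho-r\rp)^2 + \tau(\rp)^2 + \frac{\tau(d-1)}{r^2}\rho^2\right]dx \;=\; \int_\Omega N[\rho]\,dx,
\]
while the denominator collapses to $\int_\Omega \rho^2\,dx$. This yields the inequality \eqref{boundRC}.

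For the equality statement, observe that the assumption $|\Omega|=|\BB|$ (made at the start of Section~\ref{mainthm}) forces $\Omega^*=\BB$. When $\Omega=\Ostar=\BB$, the linear extension in the definition of $\rho$ is inactive and $\rho\equiv R$ on the entire domain, so each $u_k = x_k R(r)/r$ is precisely a genuine fundamental mode of the unit ball (these are the $d$ spherical harmonics of order $1$ times $R$), and therefore $Q[u_k]=\ostar$ for every $k$. Consequently the ratio of sums equals $\ostar$, and since the ball realizes the fundamental tone exactly, the inequality \eqref{boundRC} becomes an equality.

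There is no serious obstacle here: the argument is a direct Rayleigh--Ritz computation, and the only mildly subtle point is the reduction from $d$ separate quotient bounds to a single averaged bound via $\min_k (A_k/B_k)\leq (\sum_k A_k)/(\sum_k B_k)$, which is precisely what lets us pool the $d$ trial functions into one radial expression amenable to the subsequent rearrangement arguments of the following sections.
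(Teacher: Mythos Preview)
Your proof is correct and follows essentially the same approach as the paper: apply Rayleigh--Ritz to each $u_k$, combine the $d$ inequalities into a single quotient of sums, and evaluate via Fact~\ref{derivs}. The only cosmetic difference is that the paper clears denominators and sums the inequalities $\omega\int_\Omega|u_k|^2\,dx\le\int_\Omega(|D^2u_k|^2+\tau|Du_k|^2)\,dx$ directly, whereas you pass through the mediant inequality $\min_k A_k/B_k\le(\sum A_k)/(\sum B_k)$; these are equivalent routes to the same bound.
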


\begin{proof}
For $u_k$ defined as in Lemma~\ref{trialfcn}, we have
\[
\omega \leq Q[u_k] = \frac{\int_\Omega|D^2u_k|^2+\tau|Du_k|^2\,dx}{\int_\Omega|u_k|^2\,dx},
\]
from the Rayleigh-Ritz characterization. We have equality when $\Omega=\Ostar$ because the $u_k$ are the eigenfunctions for the ball associated with the fundamental tone; see \cite{chasman}.  Multiplying both sides by $\int_\Omega|u_k|^2\,dx$ and summing over all $k$, we obtain
\begin{equation}
\omega \int_\Omega \sum_{k=1}^d|u_k|^2\,dx
\leq \int_\Omega\sum_{k=1}^d|D^2u_k|^2+\tau\sum_{k=1}^d|Du_k|^2\,dx\label{multsum}
\end{equation}
again with equality if $\Omega=\Ostar$.

By these sums in Fact~\ref{derivs}, we see inequality \eqref{multsum} becomes
\[
\omega \int_\Omega \rho^2\,dx \leq \int_\Omega\left((\rpp)^2+\frac{3(d-1)}{r^4}(\rho-r\rp)^2+\tau(\rp)^2+\frac{\tau(d-1)}{r^2}\rho^2\right)\,dx,
\]
once more with equality if $\Omega$ is the ball $\Ostar$. Dividing both sides by $\int_\Omega \rho^2\,dx$, we obtain \eqref{boundRC}.
\end{proof}

\section{\bf Partial monotonicity of the integrands}
We want to show the quotient \eqref{boundRC} in Lemma~\ref{lemmaboundRC} has a sort of monotonicity with respect to the region $\Omega$, and so we examine the integrands of the numerator and denominator separately. The case of the denominator is much simpler; the partial monotonicity of the integrand of the numerator is much more difficult, and requires several lemmas.

We begin with the denominator.
\begin{lemma}\label{mondenom}(Monotonicity in the denominator)
 The function $\rho(r)^2$ is strictly increasing.
\end{lemma}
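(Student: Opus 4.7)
The plan is to reduce the strict monotonicity of $\rho^2$ to showing $(\rho^2)'(r) = 2\rho(r)\rp(r) > 0$ for all $r > 0$ (with the trivial $\rho(0)=0$ producing the only point where $(\rho^2)'$ vanishes). So the task collapses to verifying that $\rho$ and $\rp$ are simultaneously strictly positive on $(0, \infty)$, which I would handle in two regimes separated at $r = 1$.

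First I would dispatch the Bessel regime $[0,1]$, where $\rho \equiv R$. Lemma~\ref{gppneg} tells us $\Rpp \leq 0$ on this interval, so $\Rp$ is non-increasing there. Combined with the recalled boundary information $\Rp(1) > 0$, this forces $\Rp(r) \geq \Rp(1) > 0$ for every $r \in [0,1]$. The positivity $R(r) > 0$ on $(0,1]$ is already noted just before Lemma~\ref{trialfcn}; alternatively it can be recovered on the spot by observing that the series expansions of $j_1$ and $i_1$ both begin at order $z$, so $R(0)=0$, and then integrating the positive derivative.

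Next I would handle the linear extension on $[1, \infty)$. Here $\rp(r) \equiv \Rp(1) > 0$ is constant and positive, and $\rho(r) = R(1) + (r-1)\Rp(1) \geq R(1) > 0$ since $R(1) > 0$ (another piece of the data recalled from the companion paper). Putting the two regimes together yields $\rho(r)\rp(r) > 0$ for all $r > 0$, and hence the claim.

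I do not expect any substantive obstacle for this particular lemma: it is essentially a packaging of the concavity established in Lemma~\ref{gppneg} together with the two positivity inputs $R > 0$ on $(0,1]$ and $\Rp(1) > 0$ recorded just above the statement of Lemma~\ref{trialfcn}. The serious analytical work is reserved for the companion Lemma~\ref{monnum} on the much more intricate numerator integrand $N[\rho]$.
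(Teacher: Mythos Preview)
Your proof is correct, but it takes a slightly different route from the paper's own argument. The paper proves $\rp(r)>0$ directly from Bessel sign facts: on $[0,1]$ it writes $\rp(r)=aj_1^\prime(ar)+\gamma b\,i_1^\prime(br)$, observes $i_1^\prime\geq 0$, and invokes the separate fact (Lemma~\ref{besselsigns}(2)) that $j_1^\prime>0$ on $(0,\ainf)$ together with $a<\ainf$. From $\rho(0)=0$ and $\rp>0$ it then gets $\rho>0$, hence $\rho^2$ strictly increasing.

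Your argument instead recycles Lemma~\ref{gppneg}: since $\Rpp\leq 0$ on $[0,1]$, $\Rp$ is nonincreasing there, and the recorded boundary value $\Rp(1)>0$ forces $\Rp(r)\geq \Rp(1)>0$ throughout. This is a nice economy---you avoid a separate appeal to the Bessel sign lemma for $j_1^\prime$ and rely only on inputs already sitting in Section~\ref{mainthm} (namely $R>0$ on $(0,1]$, $\Rp(1)>0$, and the concavity lemma). The paper's route is marginally more self-contained in that it does not depend on Lemma~\ref{gppneg}, but both are short and entirely valid.
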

\begin{proof}
Differentiating, we see
\[
 \rp(r)=\begin{cases} j_1^\prime(ar)+\gamma i_1^\prime(br) &\text{when $0\leq r \leq 1$,}\\
         R^\prime(1) &\text{when $r\geq1$.}
        \end{cases}
\]
Obviously $i_1^\prime(br)\geq 0$. Because we have $a<\ainf$ from the proof of Theorem~3 in \cite{chasman}, the function $j_1^\prime(ar)$ is positive on $[0,1]$. Thus $\rp(r)$ is positive everywhere, and $\rho$ (and therefore $\rho^2$) is an increasing function.
\end{proof}

We do not need to prove the integrand of the numerator is strictly decreasing; a weaker ``partial monotonicity" condition is sufficient. We will say a function $F$ is \emph{partially monotonic for $\Omega$} if it satisfies
\begin{equation}
F(x)> F(y) \qquad\text{for all $x\in\Omega$ and $y\not\in\Omega$.}\label{moncond}
\end{equation}

\begin{lemma}\label{monnum}(Partial monotonicity in the numerator)
The function
\[
N[\rho]=(\rpp)^2+\frac{3(d-1)}{r^4}(\rho-r\rp)^2+\tau\left((\rp)^2+(d-1)\frac{\rho^2}{r^2}\right)
\]
satisfies condition \eqref{moncond} for $\Omega$ the unit ball.
\end{lemma}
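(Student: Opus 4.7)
\medskip

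\noindent\textbf{Plan.}  Since $\rho$ is radial and hence so is $N[\rho]$, condition \eqref{moncond} reduces to the one-dimensional statement $N[\rho](r) > N[\rho](s)$ for all $r\in[0,1]$ and $s\in(1,\infty)$.  My strategy is to prove separately (a) that $N[\rho]$ is strictly decreasing on $[1,\infty)$, so that $N[\rho](s) < N[\rho](1)$ for every $s>1$, and (b) that $N[\rho](r) \ge N[\rho](1)$ for every $r\in[0,1]$; these two estimates meeting at $N[\rho](1)$ deliver the required strict inequality.

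\medskip

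\noindent\textbf{Exterior.}  On $[1,\infty)$ the linear extension makes $\rpp\equiv 0$, $\rp\equiv \Rp(1)$, and the crucial simplification $\rho(r)-r\rp(r) \equiv R(1)-\Rp(1)$.  The concavity lemma (Lemma~\ref{gppneg}) together with $\frac{d}{dr}(R-r\Rp) = -r\Rpp \ge 0$ on $[0,1]$ and $(R-r\Rp)\big|_{r=0}=0$ gives $R(1)-\Rp(1) > 0$.  The exterior formula
\[
N[\rho](r) = \frac{3(d-1)\bigl(R(1)-\Rp(1)\bigr)^2}{r^4} + \tau\,\Rp(1)^2 + \frac{\tau(d-1)\rho(r)^2}{r^2},
\]
together with $\frac{d}{dr}\bigl(\rho(r)^2/r^2\bigr) = -2\rho(r)\bigl(R(1)-\Rp(1)\bigr)/r^3 < 0$, shows that $\frac{d}{dr}N[\rho] < 0$ on $(1,\infty)$ term by term.

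\medskip

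\noindent\textbf{Interior.}  On $[0,1]$ I would decompose $N[\rho] = A + \tau B$ with
\[
A := (\Rpp)^2 + \frac{3(d-1)(R-r\Rp)^2}{r^4}, \qquad B := (\Rp)^2 + \frac{(d-1)R^2}{r^2}.
\]
Using $\Rp > 0$, $\Rpp \le 0$ and the monotonicity of $R-r\Rp$ already established, one checks directly that $B$ is decreasing on $[0,1]$, so $B(r) \ge B(1)$.  The Hessian piece $A$ is the genuine difficulty: Bessel series expansion shows $A(0) = 0$, whereas $A(1) = 3(d-1)(R(1)-\Rp(1))^2 > 0$, so $A(r)\ge A(1)$ \emph{cannot} hold pointwise, and the deficit in $A$ must be absorbed by the surplus $\tau(B(r)-B(1))$.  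This competition is exactly why the argument splits into two regimes: for large $\tau$ (Lemma~\ref{largetau}) the linear piece $\tau B$ swamps any negative contribution from $A(r)-A(1)$, while for small $\tau$ (Lemma~\ref{smalltau}) one must work considerably harder, opening up the representation $R = j_1(ar) + \gamma i_1(br)$, invoking the sign information of Lemma~\ref{besselsigns}, and applying the quantitative Bessel bounds of Lemma~\ref{ijbounds} to reduce the inequality to polynomial-in-$r$ estimates dispatched by Lemmas~\ref{poly1} and~\ref{poly2}.

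\medskip

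\noindent\textbf{Main obstacle.}  The hardest step is clearly the interior bound for small $\tau$.  Without the $\tau B$ buffer one is forced to exploit the precise interplay of $j_1$ and $i_1$ both near $r=0$ (where the series expansion controls the decay of $A$) and near $r=1$ (where the boundary condition $Mu=0$ annihilates the leading $(\Rpp)^2$ term).  The constant $\gamma > 0$ from \eqref{gammadef} ties these two scales together and must be controlled quantitatively, which is the reason this sub-case occupies its own section and its own pair of auxiliary polynomial lemmas.
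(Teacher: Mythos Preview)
Your exterior argument is fine, but the interior argument contains a genuine gap: you have chosen the wrong decomposition, and the auxiliary lemmas you invoke do not do what you claim.

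The paper does \emph{not} split $N[\rho]$ as $A+\tau B$ with $A=(\rpp)^2+3(d-1)(\rho-r\rp)^2/r^4$. Instead it keeps $(\rpp)^2$ by itself --- this term vanishes for $r\ge 1$ and is nonnegative on $(0,1)$, so it satisfies \eqref{moncond} trivially --- then shows $\tau(\rp)^2$ is decreasing (since $(\tau(\rp)^2)'=2\tau\rp\rpp\le 0$), and finally groups the two remaining terms into
\[
(d-1)\,h(r),\qquad h(r):=\frac{3(\rho-r\rp)^2}{r^4}+\tau\frac{\rho^2}{r^2},
\]
and proves $h$ is decreasing on all of $(0,\infty)$ by computing
\[
h'(r)=-\frac{2}{r^3}(\rho-r\rp)\Bigl(\frac{6}{r^2}(\rho-r\rp)+3\rpp+\tau\rho\Bigr)
\]
and showing the bracket is positive. \emph{That} bracket is what Lemmas~\ref{largetau} and~\ref{smalltau} (and through them Lemmas~\ref{ijbounds}, \ref{poly1}, \ref{poly2}) are built to handle: after rewriting via the Bessel equations \eqref{besseleqn}--\eqref{modbesseleqn} and the recurrences \eqref{j1}--\eqref{i2}, the bracket's positivity reduces to showing $\tau-\tfrac{3a^2}{d+2}+\gamma\bigl(\tau+\tfrac{3b^2}{d+2}\bigr)>0$.

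By contrast, your $A+\tau B$ split manufactures the obstacle $A(0)=0<A(1)$ that you then try to repair with $\tau(B(r)-B(1))$. Nothing in Lemmas~\ref{largetau}--\ref{poly2} addresses the inequality $A(r)-A(1)+\tau(B(r)-B(1))\ge 0$; those lemmas are statements about the sign of a linear combination of $j_1,j_3,i_1,i_3$ evaluated at $ar,br$, not about your $A$ and $B$. So your invocation of them is a hand-wave, and the interior step as written is unproved. The fix is simply to regroup: isolate $(\rpp)^2$, and pair $3(d-1)(\rho-r\rp)^2/r^4$ with $\tau(d-1)\rho^2/r^2$ rather than with $(\rpp)^2$.
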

\begin{proof}
Given that $\rpp < 0$ on $(0,1)$ and equals zero elsewhere by Lemma~\ref{gppneg}, the function $(\rpp)^2$ satisfies condition \eqref{moncond} for the unit ball. The derivative of the function $\tau(\rp)^2$ with respect to $r$ is $2\tau\rp\rpp$, and hence negative on $(0,r)$ and zero everywhere else. Thus $\tau(\rp)^2$ is a decreasing function of $r$. It remains to show that the remaining term
\[
h(r) =\frac{3(\rho-r\rp)^2}{r^4}+\tau\frac{\rho^2}{r^2}
\]
is also a decreasing function of $r$. Differentiating, we see
\begin{align*}
h'(r) &=\frac{-2}{r^3}(\rho-r\rp)\left(\frac{6}{r^2}(\rho-r\rp)+3\rpp+\tau \rho\right).
\end{align*}
Now, $\rho-r\rp=0$ at $r=0$ and
\[
 \frac{d}{dr} (\rho-r\rp) = -r\rpp,
\]
so by Lemma~\ref{gppneg}, $(\rho-r\rp)$ is positive on $(0,\infty)$ and vanishes at zero. Thus in order for $h(r)$ to be decreasing, we must have
\begin{equation}
  \frac{6}{r^2}(\rho-r\rp)+3\rpp+\tau \rho>0. \label{quantinterest}
\end{equation}

Let $\Delta_r\rho := \rpp-(d-1)r^{-2}(\rho-r\rp)$. Recall from the Bessel equations~\eqref{besseleqn} and~\eqref{modbesseleqn} that
\begin{equation}
\Delta_rj_1(ar)=-a^2j_1(ar)\quad\text{and}\quad\Delta_ri_1(br)=b^2i_1(br).\label{delta}
\end{equation}
Then on the interval $[0,1]$,
\begin{align*}
\frac{6}{r^2}(\rho-r\rp)+3\rpp+\tau\rho &=\frac{6}{r^2}(\rho-r\rp)+3\left(\Delta_r\rho+\frac{d-1}{r^2}(\rho-r\rp)\right) +\tau\rho\\
&=\frac{3(d+1)}{r^2}(\rho-r\rp)+3\Big(-a^2j_1(ar)+\gamma b^2i_1(br)\Big)+\tau\rho,
\end{align*}
with the last equality by \eqref{delta}. Considering the first term of the last line above, we see by \eqref{j2} and \eqref{i2},
\begin{align*}
\frac{1}{r^2}(\rho-r\rp)&=\frac{1}{r^2}\Big(arj_2(ar)-br\gamma\, i_2(br)\Big)\\
&=\frac{1}{d+2}\left(a^2\Big(j_1(ar)+j_3(ar)\Big)+\gamma b^2\Big(i_3(br)-i_1(br)\Big)\right)
\end{align*}
with the second equality by \eqref{j1}, \eqref{i1}, and simplifying.

Therefore our quantity of interest in \eqref{quantinterest} can be bounded below in terms of $j_l$'s and $i_l$'s:
\begin{align*}
\frac{6}{r^2}(\rho-r\rp)&+3\rpp+\tau\rho \\
&=\left(\tau-\frac{3a^2}{d+2}\right)j_1(ar)+\frac{3a^2(d+1)}{d+2}j_3(ar)\\
&\quad+\gamma\left(\tau+\frac{3b^2}{d+2}\right)i_1(br)+\gamma\frac{3b^2(d+1)}{d+2}i_3(br).\\
&\geq \frac{3a^2(d+1)}{d+2}j_3(ar)+\gamma\frac{3b^2(d+1)}{d+2}i_3(br)\\
&\quad+\left(\tau-\frac{3a^2}{d+2}+\gamma\left(\tau+\frac{3b^2}{d+2}\right)\right)j_1(ar)
\end{align*}
with the inequality by $j_l(ar) \leq i_l(ar)\leq i_l(br)$, since $\tau>0$ and so $a<b$.

The function $i_3$ is everywhere nonnegative and the constant $\gamma$ is positive. We have $a<\ainf$, so $ar<\ainf$ on $[0,1]$ and hence the functions $j_3(ar)$ and $j_1(ar)$ are positive on $[0,1]$ by Lemma~1 of \cite{chasman}. The remaining factor is positive for all $\tau>0$ by Lemmas~\ref{largetau} and \ref{smalltau} (to follow), thus establishing \eqref{quantinterest} and completing the proof.
\end{proof}

We establish the positivity of the remaining factor first for those $\tau $ values such that $\tau>9/(d+5)$; the proof for smaller $\tau$ values is more complicated and is treated in another lemma.
\begin{lemma}\label{largetau} (Large $\tau$) We have
\begin{equation}
\tau-\frac{3a^2}{d+2} > 0  \label{tclem}
\end{equation}
for all $\tau>9/(d+5)$.
\end{lemma}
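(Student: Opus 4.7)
\medskip

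\noindent\textbf{Proof plan for Lemma~\ref{largetau}.} The plan is to convert the desired inequality $\tau - 3a^2/(d+2) > 0$ into an upper bound on $a^2$, and then obtain such an upper bound from the already proved Lemma~\ref{wbounds}. Specifically, since the eigenvalue of the unit ball decomposes as $\omega = a^2 b^2$ with $b^2 = a^2 + \tau$, and since Lemma~\ref{wbounds} gives $\omega \leq (d+2)\tau$ when $\Omega = \BB$, I would combine these to get
\[
a^2(a^2 + \tau) \;=\; \omega \;\leq\; (d+2)\tau.
\]
Solving this quadratic in $a^2$ yields the explicit bound
\[
a^2 \;\leq\; \frac{-\tau + \sqrt{\tau^2 + 4(d+2)\tau}}{2}.
\]

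Next I would show that this upper bound is strictly less than $(d+2)\tau/3$ precisely when $\tau > 9/(d+5)$. Rearranging the desired inequality $\bigl(-\tau + \sqrt{\tau^2+4(d+2)\tau}\bigr)/2 < (d+2)\tau/3$ puts both sides positive (since $\tau > 0$), so I can square the equivalent form
\[
\sqrt{\tau^2 + 4(d+2)\tau} \;<\; \tau \cdot \frac{2d+7}{3}
\]
to obtain $9\tau^2 + 36(d+2)\tau < \tau^2(2d+7)^2$. The algebraic identity $(2d+7)^2 - 9 = 4(d+2)(d+5)$ then reduces this cleanly to $36(d+2) < 4\tau(d+2)(d+5)$, i.e.\ $\tau > 9/(d+5)$, which is exactly the hypothesis. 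Chaining the two bounds gives $3a^2/(d+2) < \tau$, as required.

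The main step is recognizing that the upper bound from Lemma~\ref{wbounds} and the target inequality fit together exactly at the threshold $9/(d+5)$; once that observation is made, everything else is a short algebraic computation. I do not anticipate a genuine obstacle: the only thing to be careful about is that the squaring step is reversible (both sides are positive for $\tau > 0$), and that the equality case of the quadratic bound on $a^2$ (which would require $\Omega = \BB$ \emph{and} the coordinate functions to be extremal) is not actually attained, though for the present lemma a non-strict bound on $a^2$ is enough since we are comparing with a strict inequality in $\tau$.
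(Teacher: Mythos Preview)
Your proof is correct and uses essentially the same ingredient as the paper: the upper bound $\omega^* \leq (d+2)\tau$ from Lemma~\ref{wbounds} combined with $\omega^* = a^2(a^2+\tau)$. The only difference is cosmetic---the paper rearranges to $\tau > a^4/(d+2-a^2)$ and then splits into the cases $a^2 \gtrless 3(d+2)/(d+5)$, whereas you solve the quadratic for $a^2$ and compare directly; your route avoids the case split and is a bit cleaner.
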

\begin{proof} We use the bounds we established for $\omega(\tau)$ in Section~\ref{spectrum}.

Recall that the first free membrane eigenvalue for the ball is $\mu_1^*=\ainfsq$. Lemma~\ref{wbounds} and Proposition of Lorch and Szego (\cite{LS94}, but see the statement in \cite[Prop 4]{chasman}) together give $(d+2) \tau > \ostar > \tau d$. Because $\ostar = a^4+a^2\tau$ \cite[Prop 2]{chasman}, we obtain inequalities relating $\tau$ and $a$:
\begin{equation}
\frac{a^4}{d-a^2}> \tau > \frac{a^4}{d+2-a^2}, \label{abounds}
\end{equation}
with the upper bound holding only if $a^2<d$.

Using the lower bound, we see
\begin{align*}
\tau-\frac{3a^2}{d+2}&>\frac{a^4}{d+2-a^2}-\frac{3a^2}{d+2}\\
&= \frac{a^4(d+5)-3a^2(d+2)}{(d+2)(d+2-a^2)}
\end{align*}
which is nonnegative whenever $a^2 \geq 3(d+2)/(d+5)$. When $a^2 <3(d+2)/(d+5)$, we have
\[
\tau - \frac{3a^2}{d+2} >\tau- \frac{9}{d+5}>0,
\]
by our choice of $\tau$.
\end{proof}

\begin{lemma}\label{smalltau} (Small $\tau$) We have
\begin{equation}
 \tau-\frac{3a^2}{d+2}+\gamma\left(\tau+\frac{3b^2}{d+2}\right) > 0 \label{smalltaucond}
\end{equation}
for all $0<\tau\leq9/(d+5)$.
\end{lemma}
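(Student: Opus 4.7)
The plan is to split into two subcases depending on the size of $a^2$. In the first subcase I reuse the tension-eigenvalue argument from Lemma~\ref{largetau}; in the second I apply the polynomial bounds of Lemma~\ref{ijbounds} to extract a rational lower bound on $\gamma$ in terms of $a$ and $b$, reducing \eqref{smalltaucond} to an explicit polynomial positivity statement that I would delegate to Lemmas~\ref{poly1} and \ref{poly2}.

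When $a^2 \geq 3(d+2)/(d+5)$, the lower bound $\tau > a^4/(d+2-a^2)$ from \eqref{abounds}, combined with the same algebraic manipulation as in the proof of Lemma~\ref{largetau}, gives $\tau - 3a^2/(d+2) \geq 0$. Since $\gamma > 0$ and $\tau + 3b^2/(d+2) > 0$, the left side of \eqref{smalltaucond} is strictly positive, so this subcase is immediate.

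In the remaining range $a^2 < 3(d+2)/(d+5)$, the hypothesis $\tau \leq 9/(d+5)$ forces
\[
b^2 \;=\; a^2 + \tau \;<\; \tfrac{3(d+2)}{d+5} + \tfrac{9}{d+5} \;=\; 3,
\]
so both $a$ and $b$ lie in the intervals where Lemma~\ref{ijbounds} applies. A direct computation from the explicit formula for the coefficients $d_k$ yields the clean ratio $d_2/d_1 = 5/[6(d+4)]$, and the two Bessel bounds of that lemma then combine to give the rational lower bound
\[
\gamma \;=\; \frac{-a^2 j_1''(a)}{b^2\, i_1''(b)} \;\geq\; \frac{a^{3}\,[\,6(d+4)-5a^{2}\,]}{6\,b^{3}\,[\,d+4+b^{2}\,]}.
\]
I would substitute this into the expression in \eqref{smalltaucond}, clear denominators by multiplying through by $6\,b^{3}(d+2)(d+4+b^{2})$, and use $\tau = b^{2}-a^{2}$ to reduce the claim to an explicit polynomial inequality in the two variables $a$ and $b$ on the admissible region.

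The main obstacle is verifying that resulting polynomial inequality. I expect it to split naturally into two cleaner pieces --- one arising from the $(1+\gamma)\tau$ contribution to \eqref{smalltaucond} and one from the $3(\gamma b^{2}-a^{2})/(d+2)$ contribution --- with each piece handled by one of Lemmas~\ref{poly1} and~\ref{poly2}. The upper bound $\tau < a^{4}/(d-a^{2})$ from \eqref{abounds} may also need to be invoked to control how far $b$ can exceed $a$, so that monomials of adverse sign after expansion can be absorbed by the positive ones on the admissible domain.
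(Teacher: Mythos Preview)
Your overall strategy---bound $\gamma$ below via Lemma~\ref{ijbounds} and reduce \eqref{smalltaucond} to an explicit polynomial inequality---is the paper's strategy. But two concrete steps are missing, and without them your invocation of Lemmas~\ref{poly1} and~\ref{poly2} does not go through.

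First, you leave both $a$ and $b$ as free variables in the final polynomial, whereas the paper eliminates $b$ entirely. The upper bound in \eqref{abounds} (which you mention only tentatively) gives $b^2 < da^2/(d-a^2)$; substituting this into the denominator of your $\gamma$-bound, together with the elementary estimate $(1-x)^{3/2} > 1-\tfrac{3}{2}x$, produces a lower bound on $\gamma$ that is a rational function of $a^2$ alone. The paper also does not substitute the $\gamma$-bound directly into \eqref{smalltaucond}; it first rewrites \eqref{smalltaucond} as $\gamma \geq \gamma^*(a^2)$ for an explicit rational $\gamma^*$, using the lower bound on $\tau$ from \eqref{abounds} to remove $\tau$. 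Only after both reductions does one reach a one-variable polynomial $P(x,d)$ in $x=a^2$. Second, your guess about the roles of Lemmas~\ref{poly1} and~\ref{poly2} is incorrect: they do not handle two ``pieces'' of the inequality. Lemma~\ref{poly1} verifies $P(x,d)\geq 0$ for all integers $d\geq 3$, while Lemma~\ref{poly2} treats $d=2$ separately---and in that case the bound $b^2 < da^2/(d-a^2)$ is too weak, so the paper replaces it by the sharper $b^2 \leq \ainfsq a^2/(\ainfsq-a^2)$ coming from $\ostar > \ainfsq\tau$, yielding a different one-variable polynomial $Q(x)$. Your two-variable polynomial would require new lemmas; the existing ones are not stated in a form that applies. (As a minor point, your first subcase $a^2 \geq 3(d+2)/(d+5)$ is actually vacuous: the hypothesis $\tau\leq 9/(d+5)$ combined with the lower bound in \eqref{abounds} forces $a^2 < 3(d+2)/(d+5)$.)
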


\begin{proof} The proof will proceed as follows. For $0<\tau\leq9/(d+5)$, we restate the desired inequality \eqref{smalltaucond} as a condition on $\gamma$, \eqref{gammacond}. We then use properties of Bessel functions to establish a lower bound on $\gamma$ in terms of a rational function of $a$; we then show this function satisfies \eqref{gammacond}. We will need to treat the cases of $d\geq 3$ and $d=2$ separately, because the two-dimensional case requires better bounds than we can derive for general $d$.

First note that $b^2=a^2+\tau$, so the inequality \eqref{smalltaucond} is equivalent to
\begin{equation}
\tau > \frac{3a^2(1-\gamma)}{(d+5)\gamma+(d+2)}.
\end{equation}
Using the lower bound on $\tau$ in \eqref{abounds}, we see that the above will hold if
\begin{equation}
\gamma \geq \frac{3(d+2)-a^2(d+5)}{(3+a^2)(d+2)}=:\gamma^*.\label{gammacond}
\end{equation}

We need only show that \eqref{gammacond} holds for all $0<\tau \leq 9/(d+5)$. We will use Taylor polynomial estimates to bound $\gamma$ below by a rational function. From Lemma~6 of \cite{chasman}, we have
\begin{align*}
 j_1''(z)&\leq -d_1z+d_2z^3 &\text{on $[0,\sqrt{3(d+2)/(d+5)}]$,}\\
i_1''(z) &\leq d_1z+\frac{6}{5}d_2z^3&\text{on $[0,\sqrt{3}]$,}.
\end{align*}
These bounds apply to $z=ar$ and $z=br$ respectively, when $r\in[0,1]$, as we show below by obtaining bounds on $a^2$ and $b^2$.

To derive our bound on $a^2$, we note that the lower bound of \eqref{abounds} together with our assumption $\tau\leq 9/(d+5)$ implies
\[
 \frac{a^4}{d+2-a^2}<\frac{9}{d+5},
\]
so that 
\[
(d+5)a^4+9a^2-9(d+2)<0.
\]
The left-hand side is increasing with respect to $a^2$ and equals zero when $a^2=3(d+2)/(d+5)$. Hence $a^2<3(d+2)/(d+5)$ and the bound on $j_1''(z)$ holds for $z=ar$ when $\tau\leq 9/(d+5)$. We use these to obtain a further bound:
\begin{align*}
0&\geq\tau-\frac{9}{d+5}\\
&>\frac{a^4}{d+2-a^2}-\frac{9}{d+5}\\
&=\frac{a^2+3}{d+2-a^2}\left(a^2-\frac{3(d+2)}{d+5}\right),
\end{align*}
and so we have $a^2<d$.

To bound $b^2$, we use $b^2=a^2+\tau$ and obtain
\[
b^2=a^2+\tau \leq \frac{3(d+2)}{d+5}+\frac{9}{d+5} = 3,
\]
and so $b^2\leq 3$.

We also have, from \eqref{abounds},
\begin{equation}
\frac{da^2}{d-a^2} > b^2 > \frac{(d+2)a^2}{d+2-a^2},\label{bbounds}
\end{equation}
with the upper bound holding in this regime because $a^2<d$.

We also need the following binomial estimate:
\begin{equation}
 1-\frac{3}{2}x<(1-x)^{3/2} \qquad\text{for $0<x<1$.}\label{32bound}
\end{equation}

Using these bounds, we see
\begin{align*}
\gamma &= \frac{-a^2 j_1^{\prime\prime}(a)}{b^2 i_1^{\prime\prime}(b)} &&\text{by definition \eqref{gammadef}}\\
&\geq \frac{a^2(d_1a-d_2a^3)}{b^2(d_1b+(6/5)d_2b^3)}&&\text{by Lemma~6 of \cite{chasman}}\\
&\geq \frac{a^3(d_1-d_2a^2)}{\left(\frac{da^2}{d-a^2}\right)^{3/2}(d_1+(6/5) d_2 \frac{da^2}{d-a^2})} &&\text{by \eqref{bbounds}}\\
&= \left(\frac{d-a^2}{d}\right)^{3/2}\frac{(d-a^2)(1-c_1a^2)}{(d-a^2+(6/5) c_1da^2)}&&\text{writing $c_1=d_2/d_1=\frac{5}{6(d+4)}$}\\
&\geq \left(1-\frac{3a^2}{2d}\right)\frac{(d-a^2)(6(d+4)-5a^2)}{(6d(d+4)-24a^2)} &&\text{by \eqref{32bound},}
\end{align*}
noting that $a^2/d < 1$ and $a^2<3(d+2)/(d+5)$.

Thus we have $\gamma-\gamma^*\geq 0$ if
\[
\left(1-\frac{3a^2}{2d}\right)\frac{(d-a^2)(6(d+4)-5a^2)}{6d(d+4)-24a^2}-\frac{3(d+2)-a^2(d+5)}{(3+a^2)(d+2)}\geq 0,
\]
or, clearing the denominators and writing $x=a^2$, if
\[
(2d-3x)(d-x)\Big(6(d+4)-5x\Big)(3+x)(d+2)-2d\Big(6d(d+4)-24x\Big)\Big(3(d+2)-x(d+5)\Big)\geq 0.
\]
The above polynomial is fourth degree in each of $d$ and $x$ and has the root $x=0$; because we are only interested in its behavior for $x\in(0,3(d+2)/(d+5))$, we may divide by $x$ and work to show the resulting polynomial 
\begin{align*}
P(x,d) &= 24d^4 +60d^3 -120d^2-432d -40 d^3 x -119d^2x -6dx +432x \\
&\qquad+43d^2x^2+113dx^2+54x^2-15dx^3-30x^3
\end{align*}
is nonnegative for $x\in(0,3(d+2)/(d+5)$. This claim is addressed in Lemma~\ref{poly1} for $d\geq3$.

For $d=2$, the function $P(x,2)$ is negative on most of our interval of interest $[0,12/7]$, and so we must improve our lower bound on $\gamma$. The derivation follows that of inequality \eqref{abounds} in the proof of Lemma~\ref{largetau}, as follows. 

By Lemma~\ref{wbounds}, $\ostar>\ainfsq\tau$, where $\ainf\approx1.84118$ is the first zero of $J_1(z)$.  By Proposition~2 of \cite{chasman} we have $\ostar=a^4+a^2\tau$, giving us
\[
\tau \leq \frac{a^4}{\ainfsq- a^2}
\]
Using $b^2=a^2+\tau$, we obtain also a bound on $b^2$:
\[
\quad b^2\leq \frac{\ainfsq a^2}{\ainfsq-a^2}.
\]
Proceeding as before, we deduce
\[
\gamma \geq \left(1-\frac{3a^2}{2\ainfsq}\right)\frac{(\ainfsq-a^2)(36-5a^2)}{36\ainfsq+(6\ainfsq-36)a^2}
\]
with the last again from \eqref{32bound}. So $\gamma-\gamma^*\geq 0$ if
\[
\left(1-\frac{3a^2}{2\ainfsq}\right)\frac{(\ainfsq-a^2)(36-5a^2)}{36\ainfsq+(6\ainfsq-36)a^2}-\frac{12-7a^2}{12+4a^2}\geq 0
\]
or, setting $x=a^2$, if the fourth degree polynomial
\[
Q(x)=\left(1-\frac{3x}{2\ainfsq}\right)(\ainfsq-x)(36-5x)(12+4x)-\Big(36\ainfsq+(6\ainfsq-36)x\Big)(12-7x)
\]
is positive on $[0,12/7]$. This positivity follows from Lemma~\ref{poly2}, completing our proof.
\end{proof}

The next two lemmas regarding the polynomials $P$ and $Q$ allow us to complete the proof of Lemma~\ref{smalltau}.

\begin{lemma}\label{poly1} The polynomial
\begin{align*}
P(x,d) &= 24d^4 +60d^3 -120d^2-432d -40 d^3 x -119d^2x -6dx +432x \\
&\qquad+43d^2x^2+113dx^2+54x^2-15dx^3-30x^3
\end{align*}
is nonnegative for all $x\in(0,3(d+2)/(d+5))$ and integers $d\geq 3$.
\end{lemma}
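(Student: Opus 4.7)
The plan is to treat $P(x,d)$ as a cubic in $x$ whose coefficients are polynomials in $d$, and to establish nonnegativity on the interval $I_d := [0,\, 3(d+2)/(d+5)]$. The leading coefficient in $x$ is $-15(d+2)$, which is negative for $d\geq 3$, so $P$ must eventually become negative as $x$ grows; the upper constraint on $x$ is therefore used in an essential way.

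First I would verify the endpoint values. At $x=0$,
\[
P(0,d) = 12d\,(2d^3 + 5d^2 - 10d - 36),
\]
whose cubic factor equals $33$ at $d=3$ and has derivative $6d^2 + 10d - 10$, positive for $d\geq 3$; hence $P(0,d) > 0$ for all integers $d\geq 3$. At the right endpoint $x^* := 3(d+2)/(d+5)$, multiplying $P(x^*,d)$ by $(d+5)^3$ clears denominators and produces a polynomial $R(d)$ in $d$ alone. I would expand $R$, verify it directly at a small number of initial values $d=3,4,\ldots$, and show that the positive leading coefficient dominates the lower-order terms beyond some threshold $d_0$, so that $R(d)\geq 0$ for all $d\geq 3$.

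The interior is the main obstacle. Differentiating, $P_x(x,d)$ is a downward-opening quadratic in $x$ with $P_x(0,d)<0$ for $d\geq 3$. Two scenarios arise. Either $P_x$ stays negative throughout $I_d$, in which case $P$ is decreasing on $I_d$ and the result follows from the endpoint bound $P(x^*,d)\geq 0$ already established; or the smaller positive root $r_1$ of $P_x=0$ lies in $I_d$, giving a local minimum of $P$ there. In the latter case, I would use the relation $P_x(r_1,d)=0$ together with the quadratic formula to rewrite $r_1$ and $r_1^2$ in terms of $\sqrt{\Delta(d)}$, where $\Delta$ is the discriminant of $P_x$, reducing $P(r_1,d)$ to the form $A(d) + B(d)\sqrt{\Delta(d)}$ with $A,B$ polynomials in $d$. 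The inequality $P(r_1,d)\geq 0$ then becomes, after isolating and squaring the radical with the appropriate sign bookkeeping, a polynomial inequality $A(d)^2 \geq B(d)^2\,\Delta(d)$ in $d$ alone, verifiable by the same asymptotic-plus-small-cases method as for $R(d)$.

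The chief difficulty is therefore the algebraic bookkeeping at the interior critical point: $P$ is not monotone on $I_d$ in general, so endpoint checks alone do not suffice. A cleaner alternative, if one can be spotted, would be a sum-of-nonnegative-pieces decomposition
\[
P(x,d) = \alpha(d)\,\bigl(3(d+2) - (d+5)x\bigr)\, q(x,d) + s(x,d),
\]
with each summand manifestly nonnegative for $x\in I_d$ and $d\geq 3$; such an identity would sidestep the discriminant calculation entirely. In its absence the radical-reduction strategy is mechanical but definite, and reduces the statement to a finite check on a univariate polynomial in $d$.
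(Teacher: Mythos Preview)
Your plan is sound and would work, but it takes a heavier route than the paper's. The paper sidesteps the discriminant computation entirely by splitting on $d$ rather than on the location of the critical point. Noting that $3(d+2)/(d+5)<3$, it bounds $P$ below on the larger interval $[0,3]$ by the crude device of setting $x=3$ in every term with a negative coefficient and $x=0$ in every term with a positive coefficient; this yields a quartic $g(d)$ in $d$ alone, which is positive for all $d\ge 7$ by a short derivative argument. The remaining four cases $d=3,4,5,6$ are then handled individually: since $P_x$ is quadratic in $x$, the critical points of each one-variable cubic $P(x,d)$ are found exactly and the minimum on $I_d$ is checked numerically. Your uniform treatment via $P(r_1,d)=A(d)+B(d)\sqrt{\Delta(d)}$ is mechanically correct and has the virtue of not relying on an ad hoc case split, but the resulting squared inequality $A(d)^2\ge B(d)^2\Delta(d)$ is a high-degree polynomial in $d$ and you would still need a small-cases-plus-asymptotics argument to finish it, together with careful sign tracking before squaring. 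The paper's coefficient-substitution trick buys a much shorter verification at the cost of leaving four explicit dimensions to check by hand; your approach trades those four checks for one substantially larger symbolic computation.
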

\begin{proof}
First note that $3(d+2)/(d+5)<3$. We bound $P$ below on the interval $x\in[0,3]$ by taking $x=3$ in terms with negative coefficients and taking $x=0$ in terms with positive coefficients, obtaining
\[
P(x,d)\geq  24d^4 -60d^3-477d^2 -855 d - 810 =: g(d).
\]
The highest order term is $24d^4$, and so $g$ is ultimately positive and increasing in $d$. Note also that
\begin{align*}
g'(d)&= 96 d^3- 180 d^2- 954 d-855\\
g''(d) &= 288d^2-360d-954.
\end{align*}
The function $g''(d)$ is a quadratic polynomial with positive leading coefficient and roots at $d \approx -1.30$ and $2.55$; thus $g'(d)$ is increasing for all $d\geq 3$.  We see that $g'(5)=1875$, so $g$ is increasing for all $d\geq 5$. Finally, $g(7)=6876$, so for all $d\geq7$ we have $g(d)> 0$ and hence $P(x,d)>0$ for all $d\geq 7$ and $x\in[0,3]$.

For $d=3,4,5,6$, we look at the polynomials $P_d(x)=P(x,d)$ directly to show that $P_d(x)>0$ on $[0,3(d+2)/(d+5)]$.  Each $P_d$ is a cubic polynomial in $x$; its first derivative $P_d^\prime(x)$ is quadratic and so the critical points of $P_d(x)$ can all be found exactly.

For $d=4$, $5$, and $6$, direct calculations show $P'_d<0$ on $[0,3(d+2)/(d+5)]$ and $P_d(3(d+2)/(d+5))>0$, so $P_d(x)>0$ on $[0,3(d+2)/(d+5)]$.

For $d=3$, our interval of interest is $[0,15/8]$. We have a critical point \\ $c\approx1.4\in[0,15/8]$, with $P_3'<0$ on $[0,c]$ and $P_3'>0$ on $[c,15/8]$. The critical value $P_3(c)\approx 79$ is positive, so $P_3(x)>0$ on the desired interval $[0,15/8]$.
\end{proof}

\begin{lemma}\label{poly2} The polynomial
\[
Q(x)=\left(1-\frac{3x}{2\ainfsq}\right)(\ainfsq-x)(36-5x)(12+4x)-\Big(36\ainfsq+(6\ainfsq-36)x\Big)(12-7x)
\]
is positive on $[0,12/7]$.
\end{lemma}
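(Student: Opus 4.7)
The plan is to exploit the fact that $x = 0$ is a root of $Q$. Direct substitution gives $Q(0) = 432\ainfsq - 432\ainfsq = 0$, so $x$ divides $Q$. Expanding the two products in the definition of $Q$ and dividing out $x$ yields $Q(x) = x R(x)$ with
\[
\ainfsq R(x) = \ainfsq(264\ainfsq - 648) + (22\ainfsq^2 - 462\ainfsq + 648)\,x + (50\ainfsq + 126)\,x^2 - 30\,x^3.
\]
It therefore suffices to prove $R(x) > 0$ on $[0, 12/7]$, which gives $Q(x) > 0$ on $(0, 12/7]$; the value $Q(0) = 0$ is harmless for the application in Lemma~\ref{smalltau} since only $\tau > 0$, hence $a > 0$, hence $x > 0$, is relevant there.

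I would then fix an explicit numerical bound $3.38 \leq \ainfsq \leq 3.40$ from the known value of the first positive zero of $J_1'$. Under this bound, each coefficient of $\ainfsq R$ has a definite sign: the constant and quadratic coefficients are positive, while the linear and cubic coefficients are negative. Evaluation at the endpoints gives $R(0) > 0$ and $R(12/7) > 0$ directly. Because $R'$ is a quadratic with $R'(0) < 0$ and $R'(12/7) > 0$, the cubic $R$ has exactly one critical point $x^* \in (0, 12/7)$, which is a local minimum (numerically $x^* \approx 1.44$). So $R$ is decreasing then increasing on $[0, 12/7]$, and the whole problem reduces to showing $R(x^*) > 0$.

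The main obstacle is this last verification, since $x^*$ depends on $\ainfsq$ through the square root in the quadratic formula, making a clean symbolic evaluation of $R(x^*)$ unpleasant. My plan is to propagate the interval $3.38 \leq \ainfsq \leq 3.40$ through the formulas, obtaining $x^* \in [1.43, 1.45]$ and then a positive lower bound for $R(x^*)$; numerically $R(x^*) \approx 118$, so the margin is comfortable and no delicate bounds on $\ainfsq$ are needed. An alternative that avoids the square root is to compute the remainder when $R$ is divided by $R'$ (a linear polynomial in $x$ after two reduction steps), whose value at $x^*$ equals $R(x^*)$, and then check positivity of this remainder on the interval $[1.43, 1.45]$. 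Either route reduces the lemma to a short arithmetic verification and completes the proof.
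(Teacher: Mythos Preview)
Your proposal is correct and follows essentially the same route as the paper: factor out the root at $x=0$, observe that the resulting cubic $g(x)=Q(x)/x$ has a unique interior minimum on $[0,12/7]$ (near $x\approx 1.4$), and verify numerically that this minimum value is positive. Your version is somewhat more careful in that you propose explicit interval arithmetic for the transcendental constant $\ainfsq$, whereas the paper simply reports the approximate critical point and value; but the strategy is the same.
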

\begin{proof}
As in previous cases, $x=0$ is a root of this polynomial, so we examine $g(x):=Q(x)/x$. The derivative $g'(x)$ is a quadratic polynomial, so its roots can be found exactly. We see that $g$ has a critical point $c\approx 1.4$ in $[0,12/7]$, with $g'<0$ on $[0,c]$ and $g'>0$ on $[c,12/7]$. The critical value $g(c)\approx 177.8$ is positive, so $g>0$ on $[0,12/7]$.
\end{proof}

\section{\bf Proof of the isoperimetric inequality}
Now that we have established the desired monotonicity of our quotient, we need two more lemmas before we can prove the isoperimetric inequality for the free plate under tension. Our next lemma is a simple observation about integrals of monotone and partially monotone functions, which is a special case of more general rearrangement inequalities (see \cite[Chapter 3]{liebloss}).
\begin{lemma}\label{monint}
For any radial function function $F(r)$ that satisfies the partial monotonicity condition \eqref{moncond} for $\Ostar$,
\[
\int_\Omega F\,dx \leq \int_{\Ostar}F\,dx
\]
with equality if and only if $\Omega=\Ostar$.
For any strictly increasing radial function $F(r)$,
\[
\int_\Omega F\,dx \geq \int_{\Ostar}F\,dx
\]
with equality if and only $\Omega=\Ostar$.
\end{lemma}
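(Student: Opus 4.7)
The plan is a straightforward symmetric-difference argument, exploiting that $\Omega$ and $\Ostar$ have the same volume. First I would decompose both regions along their common part: writing $A = \Omega \setminus \Ostar$ and $B = \Ostar \setminus \Omega$, the equal-volume hypothesis forces $|A| = |B|$, and the integrals over $\Omega \cap \Ostar$ cancel, leaving
\[
\int_\Omega F\,dx - \int_{\Ostar} F\,dx = \int_A F\,dx - \int_B F\,dx.
\]
So both statements reduce to comparing $\int_A F$ and $\int_B F$, where $A$ lies outside $\Ostar$ and $B$ lies inside $\Ostar$.

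For the first assertion, the partial monotonicity hypothesis says that $F(y) < F(x)$ whenever $y \in A$ (so $y \notin \Ostar$) and $x \in B$ (so $x \in \Ostar$). Hence $\sup_A F \leq \inf_B F$, and combined with $|A| = |B|$ this gives
\[
\int_A F\,dx \leq |A|\sup_A F \leq |B|\inf_B F \leq \int_B F\,dx,
\]
which is exactly $\int_\Omega F \leq \int_{\Ostar} F$. For the second assertion, $F$ is a strictly increasing radial function, so $F(y) > F(x)$ whenever $|y| > R^*$ and $|x| < R^*$, where $R^*$ is the radius of $\Ostar$; that is, $F$ is large on $A$ and small on $B$, and the same sandwich argument run in reverse yields $\int_A F \geq \int_B F$.

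For the equality cases, note that the inequalities $F(y) < F(x)$ (respectively $F(y) > F(x)$) are strict whenever $A$ and $B$ both have positive measure, since in that regime one can find a point of $A$ and a point of $B$ realizing the sup and inf up to arbitrarily small error. Thus equality in either conclusion forces $|A| = |B| = 0$, which combined with $|\Omega| = |\Ostar|$ means $\Omega$ and $\Ostar$ agree up to a set of measure zero; for smoothly bounded $\Omega$ this gives $\Omega = \Ostar$.

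There is no real obstacle here: the argument is the baby case of the bathtub / layer-cake rearrangement principle, and the only thing to be careful about is that the partial monotonicity condition \eqref{moncond} is a strict pointwise inequality $F(x) > F(y)$, which is precisely what is needed to push the sup/inf sandwich through and to recover the strict-equality characterization.
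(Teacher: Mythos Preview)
Your proof is correct and follows essentially the same approach as the paper: decompose along $\Omega\cap\Ostar$, use $|\Omega\setminus\Ostar|=|\Ostar\setminus\Omega|$, and sandwich $\int_A F$ and $\int_B F$ via $\sup_A F\leq\inf_B F$. The paper's only cosmetic difference is that it handles the second assertion by applying the first to $-F$ rather than rerunning the sandwich.
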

\begin{proof} Note that $|\Omega|=|\Ostar|$ with $|\Omega\backslash\Ostar|=|\Ostar\backslash\Omega|$. Suppose $F$ satisfies \eqref{moncond} for $\Ostar$. The result follows from decomposing the domain:
\begin{align*}
\int_\Omega F\,dx &=\int_{\Omega\cap\Ostar}F\,dx+\int_{\Omega\setminus\Ostar}F\,dx\\
&\leq \int_{\Omega\cap\Ostar}F\,dx+\sup_{x\in\Omega\setminus\Ostar}|F(x)||\Omega\setminus\Ostar|  \\
&\leq \int_{\Omega\cap\Ostar}F\,dx+\inf_{x\in\Ostar\setminus\Omega}|F(x)||\Ostar\setminus\Omega|&&\text{since $F$ satisfies \eqref{moncond}.}\\
&\leq \int_{\Omega\cap\Ostar}F\,dx+\int_{\Ostar\setminus\Omega}F\,dx\\
&=\int_\Ostar F\,dx.
\end{align*}
Note that if $|\Omega\backslash\Ostar|>0$, either the second inequality or the third is strict by the strict inequality in \eqref{moncond}. If $F$ is  increasing, then apply the first part of the Lemma to the function $-F$.
\end{proof}

The final lemma describes how the eigenvalues change with the dilation of the region, and is used in the proof of the theorem to show we need only consider $\Omega$ with volume equal to that of the unit ball. We will use the notation $s\Omega := \{x\in\RR^d:x/s\in\Omega\}$ for $s>0$.
\begin{lemma}\label{scaling} (Scaling) For all $s>0$, we have
\[
 \omega(\tau,\Omega)=s^{4}\omega(s^{-2}\tau,s\Omega).
\]
\end{lemma}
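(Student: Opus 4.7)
The plan is to perform a change of variables in the Rayleigh quotient and track how each term scales. Given a trial function $u\in H^2(\Omega)$ for the Rayleigh quotient on $\Omega$ with tension $\tau$, I associate the function $v(y):=u(y/s)$ on $s\Omega$. By the chain rule, $D_y v = s^{-1}(Du)(y/s)$ and $D_y^2 v = s^{-2}(D^2u)(y/s)$, while the Jacobian of $y=sx$ contributes a factor $s^d$ in the integration. So
\[
\int_{s\Omega}|D^2v|^2\,dy = s^{d-4}\int_\Omega|D^2u|^2\,dx,\qquad \int_{s\Omega}|Dv|^2\,dy = s^{d-2}\int_\Omega|Du|^2\,dx,
\]
and similarly $\int_{s\Omega} v^2\,dy = s^d\int_\Omega u^2\,dx$.

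Combining these identities and factoring out $s^{d-4}$ in the numerator, I find
\[
Q_{s^{-2}\tau,\,s\Omega}[v] = \frac{\int_{s\Omega}|D^2v|^2+s^{-2}\tau|Dv|^2\,dy}{\int_{s\Omega}v^2\,dy} = s^{-4}\,\frac{\int_\Omega|D^2u|^2+\tau|Du|^2\,dx}{\int_\Omega u^2\,dx} = s^{-4}Q_{\tau,\Omega}[u].
\]
In particular, the specific choice $\tilde\tau=s^{-2}\tau$ is exactly what is needed to make the cross-term involving $|Du|^2$ match up after pulling the $s^{-4}$ outside.

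To pass from the Rayleigh quotient identity to the eigenvalue identity, I observe that $u\mapsto v$ is a bijection between $H^2(\Omega)$ and $H^2(s\Omega)$, and that $\int_{s\Omega}v\,dy = s^d\int_\Omega u\,dx$, so $v$ is $L^2$-orthogonal to constants on $s\Omega$ if and only if $u$ is $L^2$-orthogonal to constants on $\Omega$. The constraint in the Rayleigh--Ritz characterization of $\omega_1$ is therefore preserved. Taking the infimum of $Q_{\tau,\Omega}[u]=s^4 Q_{s^{-2}\tau,s\Omega}[v]$ over admissible trial functions on both sides yields $\omega(\tau,\Omega)=s^4\omega(s^{-2}\tau,s\Omega)$, as required.

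There is no real obstacle here; the entire content is bookkeeping of scaling factors, and the only thing to be careful about is choosing the tension rescaling $\tilde\tau = s^{-2}\tau$ so that the Hessian term and the gradient term pick up the same overall factor $s^{-4}$ after the change of variables.
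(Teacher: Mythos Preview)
Your proof is correct and follows essentially the same approach as the paper: define the rescaled function $v(y)=u(y/s)$, compute the effect of the change of variables on each term of the Rayleigh quotient to obtain $Q_{s^{-2}\tau,\,s\Omega}[v]=s^{-4}Q_{\tau,\Omega}[u]$, and then invoke the variational characterization of the fundamental tone. If anything, you are slightly more explicit than the paper in noting that the bijection $u\mapsto v$ preserves the orthogonality-to-constants constraint.
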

\begin{proof}
For any $u\in H^2(\Omega)$ with $\int_\Omega u\,dx=0$, let $\tilde{u}(x)=u(x/s)$. Then $\tilde{u}$ is a valid trial function on $s\Omega$ and so
\begin{align*}
Q_{s^{-2}\tau,s\Omega}[\tilde{u}]&=\frac{\int_{s\Omega}|D^2\tilde{u}|^2+s^{-2}\tau|D\tilde{u}|^2\,dx}{\int_{s\Omega}\tilde{u}^2\,dx}\\
&=\frac{\int_{s\Omega}|s^{-2}(D^2u)(x/s)|^2+s^{-2}\tau|s^{-1}(Du)(x/s)|^2\,dx}{\int_{s\Omega}u(x/s)^2\,dx}\\
&=\frac{s^{-4+d}\int_\Omega|D^2u|^2+\tau|Du|^2\,dy}{s^{d}\int_\Omega u^2\,dy} \qquad\qquad\qquad\text{taking $y=x/s$,}\\
&=s^{-4}Q_{\tau,\Omega}[u].
\end{align*}
Now the lemma follows from the variational characterization of the fundamental tone.
\end{proof}

We can now prove our main result.
\begin{proof}[Proof of Theorem~\ref{thm1}]
Once we have established inequality \eqref{c2ineq} for all regions $\Omega$ of volume equal to that of the unit ball and all $\tau>0$, we obtain \eqref{c2ineq} for regions of arbitrary volume, since
\[
\omega(\tau,\Omega)=s^{4}\omega(s^{-2}\tau,s\Omega)\leq s^{4}\omega(s^{-2}\tau,s\Ostar)=\omega(\tau,\Ostar),
\]
for all $s>0$ by Lemma~\ref{scaling}.

Thus it suffices to prove the theorem for $\Omega$ with volume equal to that of the unit ball, so that $\Ostar$ is the unit ball. We may also translate $\Omega$ as in Lemma~\ref{trialfcn}, which leaves the fundamental tone unchanged. Then,
\begin{align*}
\omega &\leq \frac{\int_\Omega N[\rho]\,dx}{\int_\Omega \rho^2\,dx} &&\text{by Lemma~\ref{lemmaboundRC}}\\
&\leq \frac{\int_\Ostar N[\rho]\,dx}{\int_\Ostar \rho^2\,dx}&&\text{by Lemmas~\ref{mondenom}, \ref{monnum}, and~\ref{monint}}\\
&=\ostar,
\end{align*}
by applying the equality condition in Lemma~\ref{lemmaboundRC}. Finally, if equality holds, then $\Omega$ must be a ball, by the equality statement in Lemma~\ref{monint}.
\end{proof}

\section{\bf Further Directions}
The isperimetric problem for the free plate considered in this paper can be generalized in several different directions: considering the case where the material property Poisson's ratio is nonzero, investigating a stronger inequality involving the harmonic mean of eigenvalues, and considering the problem on curved spaces.
\subsection*{Poisson's Ratio}
One generalization of the free plate problem is to account for Poisson's Ratio, a property of the material of the plate that describes how a rectangle of the material stretches or shrinks in one direction when stretched along the perpendicular direction. Our Rayleigh quotient and work so far all hold for a material where Poisson's Ratio is zero. Most real-world materials have $\sigma\in[0,1/2]$, although there exist some materials with negative Poisson's Ratio. 

We will assume $\sigma \in [0,1)$ in order to be assured of coercivity of the generalized Rayleigh quotient, given by
\begin{equation}\label{RQsigma}
Q[u] = \frac{\int_\Omega (1-\sigma) |D^2 u|^2 + \sigma(\Delta u)^2 + \tau |D u|^2\,dx}{\int_\Omega |u|^2\,dx}.
\end{equation}
This quotient reduces to our previous quotient \eqref{RQ} when $\sigma=0$. Following our earlier derivation, we obtain the same eigenvalue equation
\begin{equation*}
\Delta \Delta u - \tau \Delta u = \omega u,
\end{equation*}
along with new natural boundary conditions on $\partial\Omega$, which reduce to the old ones when $\sigma=0$.
\begin{align*}
(1-\sigma)\frac{\partial^2u}{\partial n^2}+\sigma\Delta u\Big|_{\partial\Omega} = 0,\\
\tau\frac{\partial u}{\partial n}-(1-\sigma)\sdiv\left(\sproj\left[(D^2u)n\right]\right)-\frac{\partial\Delta u}{\partial n}\Big|_{\partial\Omega} = 0.
\end{align*}

The generalization to nonzero $\sigma$ does not change the eigenvalue equation and hence the general form of solutions is preserved. However, the change in the Rayleigh quotient affects the proof of Theorem~3 of \cite{chasman}, which identified the fundamental mode of the ball.  This in turn affects the proof of the isoperimetric inequality in this paper. We can no longer complete the square in the Rayleigh quotient as in \cite[Theorem 3]{chasman} to show the fundamental mode of the ball corresponds to $l=0$ or $1$, although for some values of $\sigma$ we can adapt the proof to show the lowest eigenvalue corresponding to $l=1$ is lower than that for $l=0$.

\subsection*{Harmonic mean of low eigenvalues}
In two dimensions, Szeg\H o was able to prove a stronger statement of the Szeg\H o-Weinberger inequality using conformal mappings \cite{S50,Serrata}. Specifically, he proved that the sum of reciprocals
\[
\frac{1}{\mu_1}+\frac{1}{\mu_2}
\]
is minimal for a disk. In other words, the harmonic mean of $\mu_1$ and $\mu_2$ is maximal for the disk.  Our investigation in \cite[Chapter 3]{cthesis} with the moment of inertia suggests a similar result for the free plate, since the moment of inertia is minimal for a ball. That is, for the free plate, we conjecture
\[
\frac{1}{d}\sum_{i=1}^d\frac{1}{\omega_i(\Omega)}\geq \frac{1}{\omega_1(\Ostar)}.
\]

\subsection*{Curved spaces}
We have taken our region $\Omega$ to be in Euclidean space $\RR^d$, but we could consider the same eigenvalue problem on a region in spaces of constant curvature: the sphere and hyperbolic space. Other eigenvalue inequalities have been proven in these spaces \cite{Asummary}. In particular, the Szeg\H o-Weinberger inequality was proved for domains on the sphere by Ashbaugh and Benguria \cite{ABSW}. Another direction of generalization would be Hersch-type bounds for metrics on the whole sphere or torus; see \cite{hersch}.

\section*{Acknowledgments} I am grateful to the University of Illinois
Department of Mathematics and the Research Board for support during my
graduate studies, and the National Science Foundation for graduate student support
under grants DMS-0140481 (Laugesen) and DMS-0803120 (Hundertmark) and DMS 99-83160
(VIGRE), and the University of Illinois Department of Mathematics for travel support to attend the 2007 Sectional meeting of the AMS in New York. I would also like to thank the Mathematisches Forschungsinstitut Oberwolfach for travel support to attend the workshop on Low Eigenvalues of Laplace and Schr\"{o}dinger Operators in 2009. Finally, I would like to thank my advisor Richard Laugesen for his support and guidance throughout my time as his student and for his assistance with refining this paper.

\end{document}